\documentclass[11pt,oneside]{amsart}
\usepackage{amsmath,ifthen, amsfonts, amssymb, srcltx,
   amsopn, textcomp}
\usepackage[all]{xy}

\usepackage{hyperref}
\usepackage{graphicx}

\usepackage{dutchcal}

\usepackage[small]{caption}
%%%%%%%%%%%%%%%FLOAT ADJUSTMENT%%%%%%%%%%

\newcommand{\showcomments}{yes}

\newcommand{\hidetodo}[1]
{\ifthenelse{\equal{\showcomments}{yes}}%
% then end comment
{#1}
% else do nothing
}

\newsavebox{\commentbox}
%
% begin comment
{\ifthenelse{\equal{\showcomments}{yes}}%
% then begin comment in margin
{\footnotemark
        \begin{lrbox}{\commentbox}
        \begin{minipage}[t]{1.25in}\raggedright\sffamily\tiny
        \footnotemark[\arabic{footnote}]}
% else eat contents of the environment
{\begin{lrbox}{\commentbox}}}%
% end comment
{\ifthenelse{\equal{\showcomments}{yes}}%
% then end comment
{\end{minipage}\end{lrbox}\marginpar{\usebox{\commentbox}}}
% else finish eating
{\end{lrbox}}}

\newtheorem{thm}{Theorem}[section]
\newtheorem{lem}[thm]{Lemma}

\newtheorem{cor}[thm]{Corollary}

\theoremstyle{definition}
\newtheorem{defn}[thm]{Definition}
\newtheorem{rem}[thm]{Remark}
\newtheorem{exmp}[thm]{Example}

\newtheorem{prob}[thm]{Problem}

\newcommand{\field}[1]{\mathbb{#1}}

\newcommand{\reals}{\ensuremath{\field{R}}}
\newcommand{\peals}{\ensuremath{\field{P}}}

\newcommand{\boundary}   {{\ensuremath \partial}}

\newcommand{\p}{\textup{\textsf{p}}}

\newcommand{\edges}{\mathcal{E}}

%%%%%%%%%%%%%DIMENSIONS%%%%%%%%%%%%%
\setlength{\textwidth}{5.5in}
\setlength{\textheight}{8.3in}
\hoffset=-.27in
%%%%%\voffset=.15 in
%%%%%%%%%%%%%DIMENSIONS%%%%%%%%%%%%%

\begin{document}

\title[Good Stackings, Bislim Structures, and Invariant Staggerings]{Good Stackings, Bislim Structures,\\ and Invariant Staggerings}

\author[J.~Bamberger]{Jacob Bamberger}
\author[D.~Carrier]{David Carrier}
\author[J.~Gaster]{Jonah Gaster}
\author[D.~T.~Wise]{Daniel T. Wise}

\address{Dept. of Math. \& Stats.\\
McGill Univ. \\
Montreal, QC, Canada H3A 0B9 }

\email{jacob.bamberger@mail.mcgill.ca}
\address{Dept. of Math. \& Stats.\\
McGill Univ. \\
Montreal, QC, Canada H3A 0B9 }

\email{david.carrier@mail.mcgill.ca}
\address{Dept. of Math. Sci. \\
University of Wisconsin-Milwaukee\\
Milwaukee, WI}

\email{gaster@uwm.edu}
\address{Dept. of Math. \& Stats.\\
McGill Univ. \\
Montreal, QC, Canada H3A 0B9 }

\email{wise@math.mcgill.ca}
\subjclass[2010]{20F67, 20F65, 20E06}
\keywords{One-relator groups, orderable groups, disk diagrams}
\date{April 6, 2020}
\thanks{Research supported by NSERC}

\maketitle

\begin{abstract}
Two seemingly different properties of 2-complexes were developed concurrently as criteria for the nonpositive immersion property: `good stackings' and `bislim structures'.
We establish an equivalence between these properties by introducing a third property `invariant staggerings' mediating between them.
\end{abstract}

\section{Introduction}
The goal of this paper is to establish the equivalence between \emph{good stackings} ($GS$) and \emph{bislim structures} ($BS$).
These properties were introduced by Louder-Wilton and Helfer-Wise to solve a cycle-counting problem associated to one-relator groups \cite{LouderWilton2014,HelferWise2015}.
To facilitate this equivalence, we were led to the notion of an \emph{invariant staggering} ($IS$).
Staggered complexes arise naturally 
in the theory of one-relator groups; 
see Howie's beautiful exposition \cite{Howie87}.
An invariant staggering is an extension of the classical notion but differs in two respects: first, invariant staggerings live in the setting of the universal cover and are $\pi_1$-invariant, and second, whereas staggered complexes involve a total order on cells in the quotient, invariant staggerings utilize a partial-order.

\begin{thm}\label{main thm}
Let $X$ be a 2-complex with countably many 2-cells. 

The following are equivalent:
\begin{enumerate}
\item $X$ has a good stacking.
\item $X$ has a bislim structure.
\item $X$ has an invariant staggering.
\end{enumerate}
\end{thm}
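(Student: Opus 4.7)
The plan is to establish both equivalences by triangulating through invariant staggerings, proving $(1) \Leftrightarrow (3)$ and $(2) \Leftrightarrow (3)$ separately. Heuristically, a good stacking should be a \emph{geometric} realization of an invariant staggering by a real-valued height function, while a bislim structure should be the \emph{local} combinatorial data certifying one. This is consistent with the paper's own remark that the invariant staggering was introduced precisely to mediate between the other two concepts.

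For $(1) \Rightarrow (3)$, I would lift a good stacking of $X$ to $\tilde X$ and declare $\tilde e < \tilde f$ when $\tilde e$ lies below $\tilde f$ in the stacking. Since the stacking structure on $\tilde X$ is $\pi_1$-equivariant by construction, the resulting total order is $\pi_1$-invariant, and the compatibility axioms required of an invariant staggering should be a direct translation of the ``goodness'' conditions governing how boundaries of higher 2-cells interact with lower ones. The converse $(3) \Rightarrow (1)$ is more delicate: one must realize an invariant partial order as a sequence of real heights. Here the countability of 2-cells of $X$ is essential, since the lifts form a countable $\pi_1$-set. I would choose a transversal to the $\pi_1$-orbits, totally order the transversal by a topological sort extending the partial order, extend equivariantly to all of $\tilde X$, and then realize the total order by distinct heights in $\mathbb{R}$. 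Verifying that the stacking produced this way satisfies the local ``good'' conditions should reduce to the staggering axioms.

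For $(3) \Leftrightarrow (2)$, I would argue locally at attaching maps. A bislim structure assigns, at each occurrence of a 1-cell in the boundary of a 2-cell, combinatorial data distinguishing comparative positions of incident 2-cells; this is precisely the restriction of an invariant staggering to pairs of 2-cells that share an edge. In the direction $(3) \Rightarrow (2)$, one reads off the bislim data at each shared 1-cell from the partial order. In the direction $(2) \Rightarrow (3)$, one assembles the local comparisons and takes transitive closure to obtain a $\pi_1$-invariant relation on 2-cells of $\tilde X$. The substantive content is that the bislim compatibility conditions guarantee this transitive closure remains antisymmetric --- i.e., produces a genuine partial order --- and that it extends to meet the other staggering axioms.

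The principal obstacle I anticipate is the equivariant extension step in $(3) \Rightarrow (1)$: totally ordering a $\pi_1$-invariant, countable partial order in a $\pi_1$-equivariant way and then realizing it by heights in $\mathbb{R}$ is where the argument is most technical, and where the countability hypothesis does its real work. A close second is the verification of antisymmetry in $(2) \Rightarrow (3)$: ruling out cycles among the locally-defined comparisons will likely require a disk-diagram argument of the kind hinted at by the paper's keywords, exploiting the specific form of the bislim conditions to prevent an ascending cycle of 2-cells from closing up.
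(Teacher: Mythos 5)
Your decomposition through $IS$ is close in spirit to the paper's (which proves the cycle $GS \Rightarrow BS \Rightarrow IS \Rightarrow GS$, with $BS \Leftrightarrow IS$ in both directions), and your $(2)\Leftrightarrow(3)$ is essentially the paper's Lemma~\ref{lem:BS implies IS} --- but it is \emph{softer} than you anticipate: a bislim structure already supplies a global $\pi_1$-invariant preorder on the 1-cells of $\widetilde X$, so the strict local comparisons $r_1^+\prec r_2^+$ generate a genuine partial order for purely formal reasons (a directed cycle would force $a\prec a$ in the ambient preorder; see Remark~\ref{rem:pre/partial order}); no disk-diagram argument is needed there. The serious gap is in $(1)\Rightarrow(3)$, which you describe as ``a direct translation of the goodness conditions.'' A stacking assigns a height to each \emph{side} of each 2-cell, and these heights vary along the boundary cycle, so ``lies below'' is only a local comparison at each shared 1-cell: it is neither total nor transitive, and nothing formal prevents its transitive closure from containing cycles. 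Indeed, the paper's Figure~\ref{fig:BadOneSidedStacking} exhibits a stacking of a cube (with highs but no lows) whose directed dual graph has a directed cycle, so acyclicity is not automatic from having a stacking. Ruling out such cycles for \emph{good} stackings is the paper's central technical result --- Lemma~\ref{lem:acyclic disks} and Theorem~\ref{thm:acyclic universal}, proved by a minimality argument over disk diagrams. This is exactly the argument you predicted would be needed in $(2)\Rightarrow(3)$; it is in fact indispensable in $(1)\Rightarrow(3)$, and without it your proposed order is not even well defined.

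Your $(3)\Rightarrow(1)$ has a second gap that is fatal as written: the step ``extend equivariantly'' requires a $\pi_1$-invariant total order extending the given $\pi_1$-invariant partial order, and such extensions do not exist in general. The paper calls groups admitting them Szpilrajneous and records that the Klein bottle group is not; relatedly, whether $IS$ implies its totally ordered variant $TIS$ is left open in the paper (Problem~\ref{prob:BS implies TIS}), so your plan would resolve an open problem in passing. The paper's Lemma~\ref{lem:ES to GS} avoids invariant total orders entirely: it maps each boundary cycle of $\widetilde X$ equivariantly into $\widetilde X^1\times \peals$, where $\peals$ is the poset of 2-cells, embeds $\peals$ into $\reals$ \emph{non}-equivariantly via Szpilrajn's theorem (this is where countability of the 2-cells enters), and then observes that $\pi_1X\backslash (\widetilde X^1\times \reals)$ is an orientable --- because the $\pi_1X$-action preserves $\prec$ --- and hence trivial $\reals$-bundle over the graph $X^1$, so the descended map lands in $X^1\times\reals$ and gives a stacking of $X$; goodness then follows from Conditions~\ref{def:ES}.\eqref{cond:pes-max} and~\ref{def:ES}.\eqref{cond:pes-min}. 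Some version of this bundle trick (or another device for sidestepping invariant total orders) is needed to make your direction $(3)\Rightarrow(1)$ work.
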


While $\pi_1X$ may not have torsion in the presence of any of the above equivalent conditions, there is a version of this theorem that holds in the presence of torsion (see Remark~\ref{rem:torsion}).
As for the assumption about 2-cells, while good stackings cannot exist when there are too many 2-cells at a vertex, there are more 
general versions, in which $\reals$ is replaced by any totally ordered set, that can be used in the same way. 

Since $\pi_1X$ is actually orderable when $X$ admits any of the above structures, we were led to consider variants that involve a total ordering.
The notion of $IS$ arose in a totally ordered form, $TIS$, and we found that $TIS$ is equivalent to a totally ordered form of $BS$, called $TBS$.
The $TIS$ version remains a motivating simplified version of $IS$. We are unsure if they are equivalent.

The proof of Theorem~\ref{main thm} consists of three logical implications contained in Lemmas~\ref{lem:GS is BS}, \ref{lem:BS implies IS}, and \ref{lem:ES to GS}.
Of particular interest are Lemma~\ref{lem:acyclic disks}, in which a good stacking of a disk diagram yields an acyclic directed dual graph, and Lemma~\ref{lem:ES to GS}, which builds a good stacking by turning the universal cover into $\pi_1$-equivariant `stepping stones' which are then `levelled' into a $\pi_1$-invariant good stacking.

\section{Good Stacking}
Let $X$ be a 2-complex with 2-cells $\{r_i\}$ having boundary paths
$\boundary_\p r_i = w_i$, and
where each attaching map  $\phi_i: w_i\rightarrow X^1$ is a closed immersed cycle.
Let $W=\sqcup w_i$ and let  $\Phi:W\rightarrow X^1$ be induced by $\{\phi_i\}$.
Let $\rho_X$ and $\rho_\reals$ be the projections of $X^1\times \reals$ onto $X^1$
and $\reals$.

\begin{defn}[$GS$]
A \emph{stacking} of $X$ is an injection $\Psi:W\rightarrow X^1\times \reals$
so that  $\Phi=\rho_X\circ \Psi$.
 The stacking is \emph{good} if for each $i$, there exists $h_i,\ell_i \in w_i$
so that:
 $\rho_\reals(\Psi(\ell_i))$
 is lowest in 
 $\rho_\reals(\Psi(\Phi^{-1}(\Phi(\ell_i))))$
 and 
  $\rho_\reals(\Psi(h_i))$ is highest in $\rho_\reals(\Psi(\Phi^{-1}(\Phi(h_i))))$.
 \end{defn}

The reader can think of the stacking as embedding $\sqcup w_i$ within $X^1\times \reals$ so that each $w_i $ is positioned so that it projects to $w_i\rightarrow X^1$, and goodness means that each $w_i$ is `visible' from both the top and bottom, without being blocked by some $w_j$.
See Figure~\ref{fig:GoodStacking} for a simple example where the 2-complex $X$ has only two 2-cells.

Good stackings were introduced and employed in a beautiful way by
$\ell$.Louder and $\mathcal h$.Wilton to prove the nonpositive immersion property for 2-complexes
associated to one-relator groups
\cite{LouderWilton2014}.

\begin{figure}
  \centering
  \includegraphics[width=.3\textwidth]{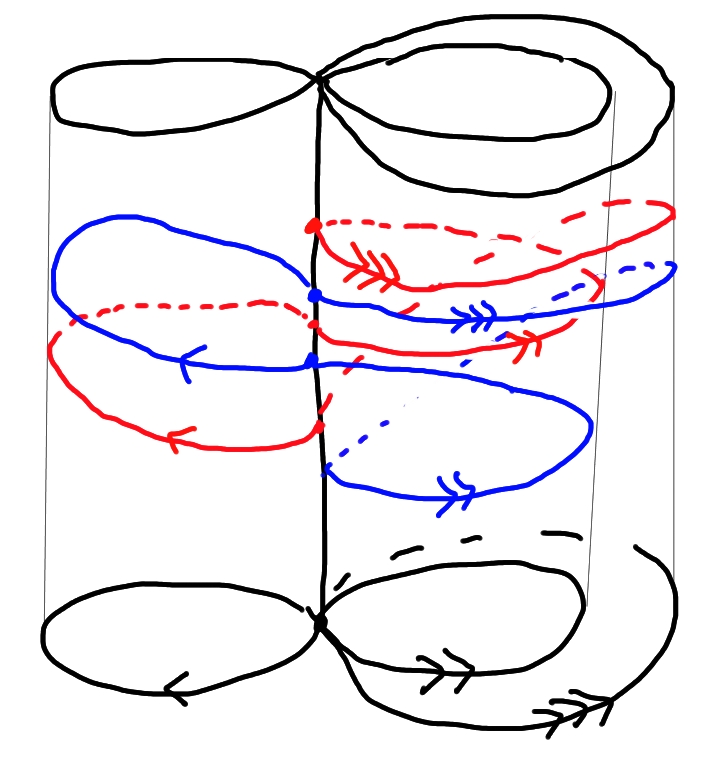}
  \caption{\label{fig:GoodStacking} The figure above illustrates a good stacking of the 2-complex associated to the presentation $\langle a,b,c \mid acb, abc\rangle$.}
\end{figure}

\section{Bislim}

\subsection{Bislim $2$-complexes}
A \emph{preorder}  $\preceq$ on a set $\edges$ is a reflexive, transitive relation on
$\edges$. 
We employ $a\prec b$ to mean 
$(a\preceq b)\wedge\neg(b\preceq a)$.

\begin{defn}[$BS$]\label{defn:slim}
  A combinatorial $2$-complex $X$, whose 2-cells have immersed attaching maps, is \emph{bislim} if:
  \begin{enumerate}
  \item\label{slim:order} There is a $\pi_1X$-invariant preorder $\preceq$ on
    the 1-cells of $\widetilde X$.
  \item\label{slim:trav once} $\boundary r$ has 
    distinguished 1-cells $r^+$ and $r^-$ for each $2$-cell $r$ of
    $\widetilde X$.\\ Moreover, $r^+$ is traversed exactly once by $\boundary_\p r$.
    \item $g(r^\pm)=(gr)^\pm$ for each $g\in\pi_1X$ and each 2-cell $r$.
  \item\label{slim:distinct} If $r_1$ and $r_2$ are distinct $2$-cells 
    in $\widetilde X$ and $r_1^+ \subset \boundary r_2$ then
    $r_1^+ \prec r_2^+$.
\item\label{bislim:bislim}
If $r_1$ and $r_2$ are distinct $2$-cells
    in $\widetilde X$ and $r_2^- \subset\boundary r_1$ 
then $r_1^+ \prec r_2^+$.
\end{enumerate}
\end{defn}

\begin{rem}\label{rem:2bislims}
The notion of `bislim' in \cite{HelferWise2015} imposes the additional requirement that
the 1-cell $r^+$ is \emph{uniquely strictly maximal} among 1-cells in $\partial r$.
More precisely, $r^+\not\preceq e$
for any other 1-cell $e$ in $\boundary r$, and $r^+$ is the unique such 1-cell in $\boundary r$.

Though Definition~\ref{defn:slim} appears more general, it is not hard to see that these two notions of bislim are equivalent. Indeed, if $X$ is bislim in the sense of Definition~\ref{defn:slim}, then one can alter the preorder to obtain unique strict maximality by restricting the preorder to $\{r^+:r\in \text{2-cells}\}$, and declaring $e\prec s^+$ for each 2-cell $s$ and $e\notin\{r^+\}$.

We caution that unique strict maximality is 
used for the notion of `slim'
and in the proof of \cite[Lem~5.2]{HelferWise2015}
and possibly in \cite[Lem~5.4]{HelferWise2015}.
\end{rem}

\begin{lem}\label{lem:unique strictly maximal}
Let $X$ be bislim. Let $\mathcal  E^+=\{r^+: r\in \text{2-cells}\}$. Then $r^+$ is uniquely strictly maximal among $\mathcal E^+\cap \partial r$, for each 2-cell $r$.
\end{lem}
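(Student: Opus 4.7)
The plan is to deduce the lemma directly from condition~(4) of Definition~\ref{defn:slim}; condition~(5) will play no role. Fix a 2-cell $r$ and let $e$ be an arbitrary element of $\mathcal{E}^+ \cap \partial r$ with $e \neq r^+$. By the definition of $\mathcal{E}^+$, we may write $e = s^+$ for some 2-cell $s$ of $\widetilde{X}$. Since $s^+ = e \neq r^+$, the 2-cells $s$ and $r$ must be distinct, so the hypothesis $r_1^+ \subset \partial r_2$ of condition~(4) is satisfied with $r_1 = s$ and $r_2 = r$. Applying (4) then gives $e = s^+ \prec r^+$, and in particular $r^+ \not\preceq e$. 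This verifies that $r^+$ is strictly maximal in $\mathcal{E}^+ \cap \partial r$ in the sense used in Remark~\ref{rem:2bislims}.

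For the uniqueness half of the statement, I would take any $e' \in \mathcal{E}^+ \cap \partial r$ with the same strict-maximality property, that is, $e' \not\preceq f$ for every other $f$ in $\mathcal{E}^+\cap \partial r$. Specializing $f = r^+$ yields $e' \not\preceq r^+$. But the previous paragraph, applied to $e'$ in place of $e$, shows that if $e' \neq r^+$ then $e' \prec r^+$, hence $e' \preceq r^+$, a contradiction. Therefore $e' = r^+$.

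There is no real obstacle to overcome; the argument is essentially a restatement of condition~(4) restricted to the subfamily $\mathcal{E}^+$. The only subtlety worth flagging is that the case $s = r$ is automatically ruled out by the assumption $e \neq r^+$, which is exactly what lets us invoke (4) and obtain strict dominance by $r^+$ over every competing element of $\mathcal{E}^+ \cap \partial r$.
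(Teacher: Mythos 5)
Your proof is correct: Condition~\ref{defn:slim}.\eqref{slim:distinct} gives $s^+\prec r^+$ (hence $r^+\not\preceq s^+$) for every $s^+\in\mathcal E^+\cap\partial r$ with $s^+\neq r^+$, which establishes strict maximality, and your contradiction argument for uniqueness is valid since $r^+$ itself lies in $\mathcal E^+\cap\partial r$. The paper states this lemma without any proof, treating it as immediate from exactly this observation, so your argument coincides with the intended one.
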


When restricting attention to $\mathcal E^+$, one obtains an alternate $BS$ viewpoint.

\begin{rem}[$BS$ with $\prec$]\label{rem:pre/partial order}
By slightly altering the preorder $\preceq $, one can always assume that $\prec$ is a partial order on the set of 1-cells $\{r^+\}$. 
Indeed, replace $\preceq$ by the relation $\prec$ 
generated by:
 $\{r_1^+ \prec r_2^+  : 
\text{when
 $r_1\neq r_2$ and    $r_1^+ \subset \boundary r_2$ 
 or $r_2^-\subset \boundary r_1$}
 \}.$
Observe that $\prec$ is a partial order. 
This is a very general fact: given a relation $\leq$, a sub relation generated by 
elements of the form $\lneq$ yields a partial order.
(We are considering a subgraph $A\subset B$ of a digraph
where each edge of $A$ is not in any directed cycle. Hence $A$ has no directed cycle.)
\end{rem}

\begin{lem}\label{lem:bislim embedded 2-cells}
If $X$ is bislim, the 2-cells of $X$ embed in $\widetilde X$.
\end{lem}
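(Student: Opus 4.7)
The plan is to argue by contradiction. Suppose some 2-cell $r$ of $\widetilde X$ has non-injective attaching map, so there exist distinct points $x,y$ on its boundary circle with the same image in $\widetilde X^1$. By condition~(2) the 1-cell $r^+$ is traversed exactly once, hence $r^+$ lies on exactly one of the two arcs of the boundary circle cut off by $x$ and $y$; let $\alpha$ be the complementary arc, and let $\gamma$ be the image of $\alpha$, a closed walk in $\widetilde X^1$ that does not contain $r^+$.

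Since $\widetilde X$ is simply connected, $\gamma$ bounds a reduced van Kampen disk diagram $D \to \widetilde X$, which I would choose of minimal area. By Remark~\ref{rem:pre/partial order} we may assume $\prec$ is a partial order on $\mathcal E^+$, and since $D$ has only finitely many 2-cells, there is a 2-cell $s$ of $D$ whose distinguished 1-cell $s^+$ is $\prec$-maximal among the $t^+$ ranging over 2-cells $t$ of $D$.

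The key step is to show that $s^+$ must lie on $\partial D = \gamma$. Indeed, if $s^+$ lay in the interior of $D$, it would be shared with another 2-cell $t \neq s$ of $D$, and condition~(4) would force $s^+ \prec t^+$, contradicting maximality of $s^+$. Therefore $s^+ \subset \gamma \subset \partial r$; since $r^+ \notin \gamma$, we have $s \neq r$, and condition~(4) then yields $s^+ \prec r^+$.

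To finish, I would combine $D$ with the half-disk of $r$ cut off by $\alpha$ to form a spherical diagram $\Sigma \to \widetilde X$, and use both conditions~(4) and~(5) on the distinguished 1-cells $t^{\pm}$ of the 2-cells of $\Sigma$ to produce a direct contradiction—ideally via a pair of adjacent 2-cells of $\Sigma$ whose preorder relations are incompatible. Alternatively, one can attempt to iterate at $s$, producing an infinite descending chain under $\prec$ in the finite collection of distinguished 1-cells appearing in a given disk diagram, where minimal elements must exist. The main obstacle I expect is precisely this closing step: turning the $\prec$-descent into a genuine termination, since $\prec$ is only a partial order and need not be globally well-founded—care is required to restrict attention to a finite subcomplex on which a minimal element can actually be extracted.
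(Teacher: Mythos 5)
There is a genuine gap, and it sits exactly where you flag it: the executed part of your argument proves only that a $\prec$-maximal distinguished 1-cell $s^+$ of a 2-cell of $D$ lies on $\partial D$ and hence satisfies $s^+\prec r^+$. That is not a contradiction of anything: $r$ is not a 2-cell of $D$, so maximality of $s^+$ among the 2-cells of $D$ is perfectly compatible with $s^+\prec r^+$ (indeed this inequality is what one expects to hold in a bislim complex). Note that up to this point you have used only Condition~\eqref{slim:distinct} of Definition~\ref{defn:slim}; Condition~\eqref{bislim:bislim} --- the ``bi'' in bislim --- never enters an executed step, and it must. Your two proposed finishes also face real obstructions. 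For the spherical diagram, reducedness of $\Sigma$ is not automatic (cancellable pairs can occur between $D$ and the cell $r$, or between the two filling disks), and adjacent 2-cells of $\Sigma$ may have equal images in $\widetilde X$, in which case Conditions~\eqref{slim:distinct} and~\eqref{bislim:bislim} say nothing. For the descent, the natural dual move --- take $u$ with $u^+$ minimal and look at an edge carrying $u^-$ --- can fail: that edge may be interior and shared with a 2-cell having the \emph{same} image as $u$; since $u^-$, unlike $u^+$, need not be traversed exactly once, reducedness does not force such a pair to be cancellable, so no strict inequality is produced. (You also never treat the case where $D$ has no 2-cells, where the conclusion should instead be that $\partial_\p r$ is not immersed.)

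The paper's proof replaces extremality by a two-sided chain argument, which is the idea your write-up is missing. From an \emph{arbitrary} 2-cell $t$ of $D$, walk backward across an edge carrying $t^-$: if it is interior, Condition~\eqref{bislim:bislim} gives a 2-cell $s$ with $s^+\prec t^+$; if it lies on $\partial D$, then $t^-\subset\partial r$ and the same condition gives $r^+\prec t^+$. Walk forward across the edge carrying $t^+$: Condition~\eqref{slim:distinct} gives $t^+\prec u^+$ for the neighboring cell $u$, or $t^+\prec r^+$ if that edge is on $\partial D$. Since $D$ is finite, this bidirectional walk either closes up, or terminates at $r$ on both ends, yielding in either case a cycle of strict inequalities $s_n^+\prec s_1^+\prec\cdots\prec s_n^+$ (possibly passing through $r^+$ itself), contradicting transitivity and irreflexivity of $\prec$. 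In other words, the contradiction comes from a $\prec$-cycle threaded through $r^+$, obtained by stepping both \emph{down} (via Condition~\eqref{bislim:bislim}) and \emph{up} (via Condition~\eqref{slim:distinct}) --- not from a maximal element inside $D$, which, as you discovered, only ever produces inequalities pointing harmlessly at $r^+$.
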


\begin{proof}
If $\boundary_\p r$ is not a simple cycle in $\widetilde X$, then we choose a reduced disk diagram $D\rightarrow X$ so that $\partial_\p D$ is a nontrivial proper subpath of $\partial_\p r$.

If $D$ has no 2-cells, then $\partial_\p r$ is not immersed. 

Otherwise, we use $D$ to find a sequence of 2-cells $s_1,\ldots,s_n$ with
\[
s_n^+\prec s_1^+ \prec s_2^+ \prec \cdots \prec s_n^+~,
\]
as follows:
Choose a 2-cell $t$ of $D$. 
There is a 2-cell $s$ so that $\partial_\p s$ traverses $t^-$, whence $s^+\prec t^+$, and there is a 2-cell $u$ so that $\partial_\p u$ traverses $t^+$, whence $t^+\prec u^+$. 
Continuing forward and backward in this way, we find a path of 2-cells that either closes, or begins and ends at $r$. In either case, we obtain the contradiction above.
\end{proof}

\section{Good Stackings and Acyclic Graphs}

\begin{defn}[Dual graph]
The \emph{dual graph} $\Gamma$  of a 2-complex $X$ is defined as follows:
There is a vertex in $\Gamma$ for each 2-cell of $X$.
If $r_1$ and $r_2$ are 2-cells whose boundaries share a 1-cell $e$,
then $\Gamma$ has an associated edge whose endpoints are the corresponding vertices $v_1$ and $v_2$.
Note that when $n$ 2-cells share a 1-cell $e$,
all the edges in a complete graph on $n$ vertices are dual to $e$
(there are no dual edges when $n\leq 1$). See Figure~\ref{fig:DualGraph} for an example.
\end{defn}

\begin{figure}
\centering
\includegraphics[width=6cm]{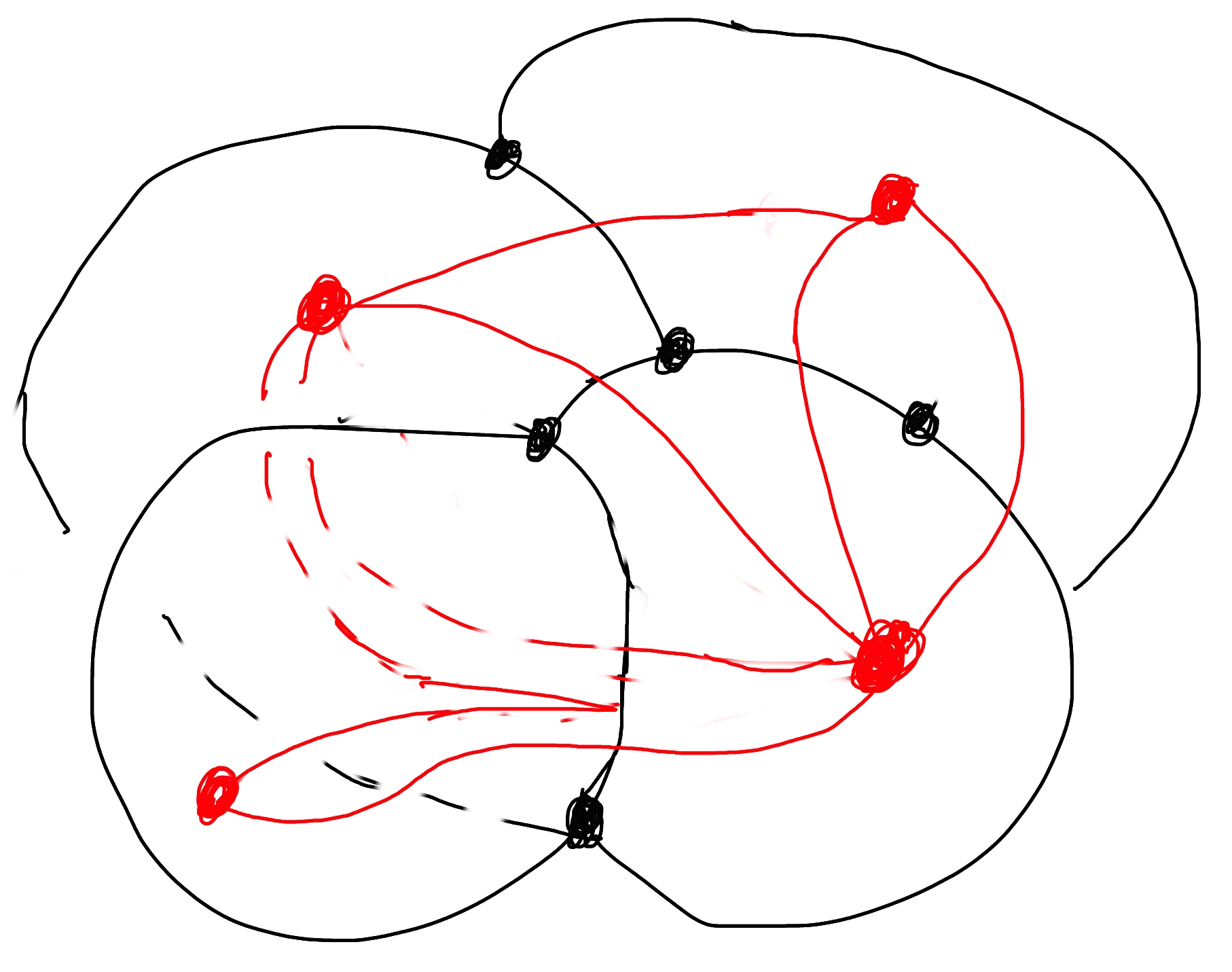}
\caption{The (red) dual graph of a (black) 2-complex.}
\label{fig:DualGraph}
\end{figure}

\begin{defn}[Directing the dual]
Suppose  $ X$ has a  stacking.
We direct its dual graph $\Gamma$ so that
if the side of the 2-cell $r_1$ at $e$ is lower than the side of the 2-cell $r_2$ at $e$ then
 the associated edge is directed  from $v_1$ to $v_2$.
\end{defn}

A digraph is \emph{acyclic} if it has no directed cycle.

\begin{lem}\label{lem:acyclic disks}
Let $D$ be a disk diagram with a good stacking.
Then $\Gamma$ is acyclic.
\end{lem}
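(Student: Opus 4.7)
Suppose for contradiction that $\Gamma$ contains a directed cycle, and pick one of minimum length $C = r_1 \to r_2 \to \cdots \to r_n \to r_1$ with $e_i = \partial r_i \cap \partial r_{i+1}$ for the shared 1-cells (indices mod $n$). Representing $C$ as a closed curve $\gamma$ in the planar disk $D$, $\gamma$ bounds a subdisk $R \subset D$. The crucial consequence of $\Psi$ being an injection is a \emph{non-crossing} property: along each 1-cell $e$ of $D^1$ the lifts in $W$ maintain their relative heights throughout $e$, so at every vertex $v$ of $D$ the heights of the 2-cells meeting $v$ are totally ordered in $\reals$.

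The central step is to rule out any 2-cell $r^*$ lying strictly inside $R$. Taking such an $r^*$ whose dual neighbors are all cycle 2-cells (such an ``outermost interior'' $r^*$ can be extracted by iteratively passing toward $\gamma$), at each 1-cell $e' \subset \partial r^*$ the non-crossing attaches a sign $S_{e'} \in \{+, -\}$ indicating whether $r^*$'s lift lies above or below its neighbor along $e'$. Walking cyclically around $\partial r^*$, each pair of successive 1-cells meets at a vertex $v$ which is the inner endpoint of some $e_i$; at $v$, the $C$-constraint that $r_{i+1}$'s lift exceeds $r_i$'s, combined with the total order on the three heights of $\{r^*, r_i, r_{i+1}\}$ at $v$, forces the implication: if $S = -$ at one 1-cell, then $S = -$ at the next. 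By cyclicity all $S_{e'}$'s agree, so $r^*$ lies strictly above all of its neighbors (so $\ell_{r^*}$ cannot exist) or strictly below them all (so $h_{r^*}$ cannot exist), contradicting the good stacking.

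With no 2-cells strictly inside $R$, any interior 1-cell of $R$ shared between non-adjacent cycle 2-cells $r_i, r_j$ would produce a dual-graph chord which, joined to one of the two sub-arcs of $C$, yields a directed cycle shorter than $C$---contradicting minimality. Hence all interior structure in $R$ comes from the $e_i$'s alone, and the $n$ inner endpoints of the $e_i$'s must coincide at a single vertex $v_0 \in R^\circ$: the pinwheel configuration. At $v_0$ all of $r_1, \ldots, r_n$ meet with totally ordered heights in $\reals$, yet the direction of $C$ forces the cyclic inequality $h_1 < h_2 < \cdots < h_n < h_1$ (with $h_i$ denoting $r_i$'s height at $v_0$), impossible for real numbers.

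The main technical obstacle is the sign-propagation bookkeeping in the first step; in particular one must verify, invoking minimality of $C$ again, that consecutive neighbors of $r^*$ encountered around $\partial r^*$ are always cycle-adjacent, so that $C$'s direction actually constrains the heights at each intermediate vertex of $\partial r^*$. Once that is settled, the chord-splitting reduction to a pinwheel and the final contradiction from the total order of real numbers are essentially routine planar-combinatorial arguments.
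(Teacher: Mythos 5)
Your endgame (collapse to a pinwheel around a single $0$-cell, then contradict the total order that injectivity of $\Psi$ imposes on the heights of the corners at that $0$-cell) is exactly the paper's base case, but your descent to that configuration has a genuine gap. The crucial step---ruling out $2$-cells strictly inside $R$---rests on extracting an ``outermost interior'' $2$-cell $r^*$ \emph{all} of whose dual neighbours are cycle $2$-cells, arranged as a wheel so that consecutive $1$-cells of $\partial r^*$ meet at inner endpoints of the $e_i$. No such $r^*$ need exist: if the region inside $\gamma$ contains many $2$-cells (say a grid of them), the ones in the middle have only interior neighbours, and ``iteratively passing toward $\gamma$'' does not produce a $2$-cell of the required form. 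You yourself flag the needed cycle-adjacency of consecutive neighbours as unverified; that is precisely where the argument breaks, since your sign-propagation only makes sense in the wheel configuration. Moreover, minimality of the \emph{length} of $C$ is too weak a measure even after interior $2$-cells are (hypothetically) excluded: an interior $1$-cell shared by \emph{adjacent} cycle $2$-cells $r_i, r_{i+1}$ whose dual edge is directed $r_i \to r_{i+1}$ yields only another cycle of the same length, and an interior $1$-cell traversed twice by a single cycle $2$-cell carries no dual edge at all, so neither is excluded by your chord argument. Hence you cannot conclude that ``all interior structure in $R$ comes from the $e_i$'s alone,'' and the pinwheel is not forced.

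The paper's proof never rules out interior $2$-cells, and this is the idea your proposal is missing. It chooses the directed cycle to enclose the fewest \emph{closed $1$-cells} of $D$, and uses goodness at each enclosed $2$-cell: its high and low points lie over distinct internal $1$-cells, so its dual vertex has both an incoming and an outgoing edge. Consequently, any directed edge dual to an enclosed $1$-cell extends forwards and backwards through interior $2$-cells to a directed path $P$ starting and ending on the cycle; surgery on the cycle along $P$ produces a directed cycle enclosing strictly fewer closed $1$-cells, a contradiction. The base case (no enclosed closed $1$-cells) is then your pinwheel, where the paper additionally disposes of the degenerate possibility of a single corner at the central $0$-cell---a backtrack, excluded because attaching maps are immersed---a case your write-up does not address. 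If you want to salvage your outline, replace length-minimality by the enclosed-$1$-cell count and replace the sign-propagation step by the observation that every enclosed $2$-cell has both an incoming and an outgoing dual edge.
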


\newcommand{\C}{\mathcal{C}}

\begin{proof}
Observe that there is an embedding $\Gamma\rightarrow D$ that sends vertices to 2-cells and edges to paths crossing 1-cells.
Suppose $\Gamma$ has a directed cycle $\C$.
Choose $\C$ so that the interior $R$ of the region inside $\C$ contains as few 
closed 1-cells of $D$ as possible.
Observe that each 1-cell of $D$ within $\C$ is dual to an edge of $\Gamma$. Moreover, each 2-cell of $D$ within $\C$ has no 1-cell on $\partial D$, and therefore has an incoming and an outgoing edge of $\Gamma$ at the corresponding dual vertex. 
Indeed, let $r$ be such a 2-cell, with dual vertex $v$.
Observe that over $\boundary_\p r$, the high/low points $h,\ell$ lie on distinct 1-cells $e_h,e_\ell$ of $D$.
Consequently, the edge dual to $e_h$ is directed towards the vertex $v$ dual to $r$, and the edge dual to $e_\ell$ is directed  from $v$.

Suppose that there is a closed 1-cell $e$ of $D$, within $\C$. As $e$ is internal, there is a directed edge of $\Gamma$ dual to $e$. Following this directed edge forwards and backwards, we find a directed path $P$ in $\Gamma$ that starts and ends on $\C$, through $e$. Combining $P$ with part of $\C$ yields a cycle $\C'$ in $\Gamma$ that contains fewer closed 1-cells than $\C$.

Finally, suppose that there are no closed 1-cells of $D$ within $\C$. In that case, $\C$ surrounds a single 0-cell $p$ of $D$.
However, the good stacking on $D$ induces a total order on corners of 2-cells at $p$ consistent with the directed edges between dual vertices in $\Gamma$ around $p$.
This total order contradicts the assumption that $\C$ is a directed loop,
unless $\C$ is a loop, and there is exactly one corner at $p$.
But this corresponds to the  situation of a backtrack in the boundary path of a 2-cell. 
This is excluded, since by hypothesis the boundary path of each 2-cell is immersed. 	   
\end{proof}

\begin{exmp} Lemma~\ref{lem:acyclic disks} can fail
for stackings that are ``half good'' in the sense that they only have a high. We refer to Figure~\ref{fig:BadOneSidedStacking}.
\end{exmp}
\begin{figure}
  \centering
  \includegraphics[width=.3\textwidth]{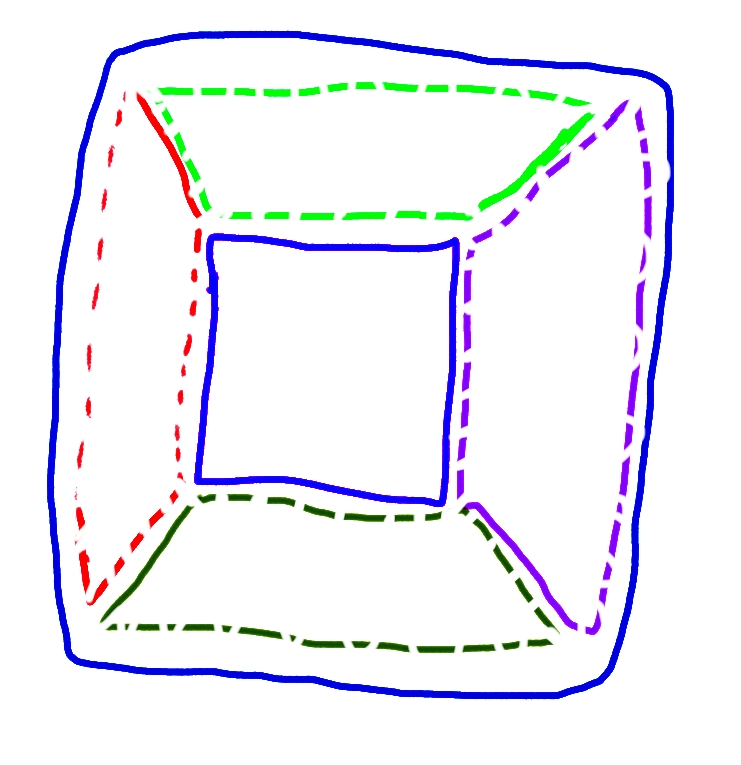}
  \caption{\label{fig:BadOneSidedStacking} In the stacking of a cube above
bold is higher than dotted. Each 2-cell cycle has a high, but might not have a low.
 Its dual graph has a directed cycle.}
\end{figure}

\begin{thm}\label{thm:acyclic universal}
Let $\widetilde X$ be a simply-connected 2-complex with a good stacking.
Its directed dual  $\Gamma$ is acyclic.
\end{thm}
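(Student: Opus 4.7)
Suppose for contradiction that $\Gamma$ contains a directed cycle $\mathcal{C}$: a sequence of $2$-cells $\tilde r_1,\dots,\tilde r_n$ (indices cyclic) together with $1$-cells $\tilde e_i\subset\boundary \tilde r_i\cap\boundary \tilde r_{i+1}$ where the side of $\tilde r_i$ at $\tilde e_i$ is lower than the side of $\tilde r_{i+1}$. The plan is to realize $\mathcal{C}$ inside an actual disk diagram over $\widetilde X$ and then invoke Lemma~\ref{lem:acyclic disks}.

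First I build a combinatorial annular thickening $N$ of $\mathcal{C}$ by taking abstract polygonal disks $\widehat r_1,\dots,\widehat r_n$ modeled on the $\tilde r_i$, and gluing $\widehat r_i$ to $\widehat r_{i+1}$ along a single chosen edge $\widehat e_i$ lifting $\tilde e_i$. An inductive gluing argument---each of the first $n-1$ gluings merges two disks into a disk, and the final gluing closes the resulting strip into an annulus via the orientation data from the natural combinatorial map $N\to\widetilde X$---produces a topological annulus $N$ whose two boundary circles trace closed edge-paths $\beta_{\mathrm{in}},\beta_{\mathrm{out}}$ in $\widetilde X^1$. Since $\widetilde X$ is simply connected, $\beta_{\mathrm{in}}$ is null-homotopic and van Kampen's lemma supplies a reduced disk diagram $D_{\mathrm{in}}\to\widetilde X$ with $\boundary_\p D_{\mathrm{in}}=\beta_{\mathrm{in}}$. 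Gluing $D_{\mathrm{in}}$ to $N$ along $\beta_{\mathrm{in}}$ yields a disk diagram $D\to\widetilde X$ whose $2$-cells still include the $\widehat r_i$'s meeting along the $\widehat e_i$'s.

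I then pull back the good stacking $\Psi$ of $\widetilde X$ along $D\to\widetilde X$ to obtain $\Psi_D$. The high/low property for each $2$-cell of $D$ is inherited from its image under $D\to\widetilde X$: the highest pulled-back occurrence of any $1$-cell of $D$ is realized by the preimage of the highest occurrence of its image in $\widetilde X$. A generic perturbation of heights on any $2$-cells of $D$ that share an image in $\widetilde X$ restores injectivity while preserving highs and lows. Thus $\Psi_D$ is a good stacking, and by construction the directed dual $\Gamma_D$ of $D$ contains $\widehat r_1\to\widehat r_2\to\cdots\to\widehat r_n\to\widehat r_1$ as a directed cycle. This contradicts Lemma~\ref{lem:acyclic disks}.

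The main technical obstacle is the combinatorial construction of $N$, particularly in small or degenerate situations: the case $n=1$, in which $\mathcal{C}$ is a self-loop forcing a single disk to be glued to itself along two of its own boundary edges; and the case when some of the $\tilde r_i$ or $\tilde e_i$ coincide in $\widetilde X$ so that $N\to\widetilde X$ fails to be an embedding. These are handled using the orientation data supplied by the map $N\to\widetilde X$, combined with the height-perturbations noted above.
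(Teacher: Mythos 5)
Your starting point matches the paper's: realize the directed cycle $\mathcal{C}$ as a chain of 2-cells glued along the dual 1-cells, use simple connectivity to fill with a disk diagram, and reduce to the planar case. But the two places where you defer to ``orientation data'' and ``generic perturbation'' are precisely where the real difficulties lie, and as written both steps fail. First, the chain of 2-cells need not close up into an annulus: nothing in the definition of a 2-complex or of a stacking supplies orientation data ruling out a M\"obius strip --- whether the strip closes with a flip is forced by the attaching maps in $\widetilde X$, not something you get to choose. The paper explicitly allows ``an annulus or M\"obius strip $A$''; in the M\"obius case there is no inner boundary circle to cap off (the boundary circle does not even generate $\pi_1 A$), the capped-off object is not planar, and Lemma~\ref{lem:acyclic disks} cannot be invoked. (This gap is patchable, e.g.\ by traversing $\mathcal{C}$ twice to pass to the orientable double cover, but you would need to say so.)

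The serious gap is the second one. The diagram $D = N \cup_{\beta_{\mathrm{in}}} D_{\mathrm{in}}$ need not be reduced: a 2-cell $s$ of $D_{\mathrm{in}}$ can form a cancellable pair with some $\widehat r_i$ of $N$ along a 1-cell of $\beta_{\mathrm{in}}$. Injectivity of the pulled-back stacking fails exactly at such cancellable pairs (two sides of a 1-cell of $D$ mapping to the same traversal in $\widetilde X$ get identical heights; had $D$ been reduced, the pullback would automatically be an injective good stacking and no perturbation would be needed at all). And at a cancellable pair no perturbation can work: $\partial s$ and $\partial \widehat r_i$ carry identical pulled-back height functions, and once you separate them, the lower copy's only candidate high point --- the preimage of the high point of the common image 2-cell --- sits directly below the corresponding point of the upper copy, and the image 2-cell need not be highest over any other 1-cell of its boundary; so the lower cell loses its high and goodness fails. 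Nor can you instead reduce $D$, since cancelling such a pair deletes $\widehat r_i$ and breaks the very directed cycle you are trying to preserve. This is exactly the difficulty the paper's proof is organized around: it glues $D$ along a curve generating $\pi_1 A$, takes $(\mathcal{C}, A, D)$ with the number of 1-cells of $D$ minimal over all such tuples, shows that every isolated or non-isolated interior 1-cell of $D$ --- whether or not it participates in a cancellable pair with $A$ --- yields a strictly shorter directed cycle bounding a smaller diagram, and finishes with the base case where $D$ has no 1-cells, which is excluded by the total order on corners at a 0-cell induced by the stacking. Your proposal contains no substitute for this minimality mechanism.
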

\begin{proof}
Let $\C$ be a directed cycle in $\Gamma$.
Glue the consecutive 2-cells dual to vertices of $\C$ together along 1-cells dual to edges of $\C$ to form an annulus or M\"obius strip $A$. 
Choose a disk diagram $D\rightarrow \widetilde X$ so that $\boundary_\p D$ generates $\pi_1A$.
Moreover, we may assume that $D$ has a minimal number of 1-cells among all tuples $(\C,A,D)$.

Observe that $D$ is reduced, so edges of the dual of $D$ map to edges of $\Gamma$, and hence the dual of $D$ is directed.
Moreover, for each non-isolated 1-cell $e$ on $\partial_\p D$, the incident 2-cells in $D$ and $A$ do not form a cancellable pair along $e$.
Hence the 1-cell dual to $e$ maps to a 1-cell of $\widetilde X$, and is also directed. 

Suppose $e$ is an isolated 1-cell of $D$. 
If the 2-cells of $A$ that meet along $e$ in $A\cup_{\partial_\p D}D$ do not form a cancellable pair along $e$, we would obtain a shorter directed cycle that bounds a disk diagram with fewer 1-cells (a component of $D-e$).
Likewise, if the 2-cells that meet along $e$ do form a cancellable pair, we obtain a shorter directed cycle and disk diagram with fewer 1-cells.

Consider a non-isolated 1-cell of $D$. 
It is dual to an edge that extends to a directed arc $P\rightarrow A\cup_{\partial_\p D}D$ that starts and ends on vertices dual to 2-cells in $A$. 
Here we use that there are no cancellable pairs between $D$ and $A$ to ensure that $P$ starts and ends on $\C$. 
We obtain a directed cycle by combining $P$ with part of $\C$, and the result is a smaller disk diagram.

There is a base case: $D$ may contain zero 1-cells.
In this case $\C$ travels around a single vertex of $\Gamma$, contradicting the total order of 2-cells incident to vertices induced by a good stacking, as in the proof of Lemma~\ref{lem:acyclic disks}.
\end{proof}

\begin{cor}\label{cor:embeds}
Let $X$ be a 2-complex with a good stacking.
Then $\boundary_\p \tilde r \rightarrow \widetilde X$ is an embedding 
for each 2-cell $\tilde r$ of the universal cover $\widetilde X$ of $X$.
\end{cor}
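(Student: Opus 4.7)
The plan is to lift the good stacking on $X$ to $\widetilde X$, apply Theorem~\ref{thm:acyclic universal}, and derive a contradiction via a disk-diagram argument modeled on the proof of Lemma~\ref{lem:bislim embedded 2-cells}. First, the good stacking $\Psi$ on $X$ induces a good stacking $\widetilde\Psi$ on $\widetilde X$: distinct lifts of a 2-cell of $X$ that share a 1-cell of $\widetilde X$ necessarily correspond to distinct traversals of the underlying 1-cell of $X$, which have distinct heights under $\Psi$, so $\widetilde\Psi$ remains injective, and the high/low data lift directly under the covering map. Then Theorem~\ref{thm:acyclic universal} yields that the directed dual $\Gamma(\widetilde X)$ is acyclic.

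Suppose for contradiction that $\boundary_\p \tilde r$ is not an embedding. Since the attaching map is immersed and hence locally injective, there exist distinct $a, b \in w_{\tilde r}$ with $\Phi(a) = \Phi(b) = p \in \widetilde X$, so one of the two arcs $\gamma \subset w_{\tilde r}$ joining $a$ and $b$ is a proper closed subpath of $\boundary_\p \tilde r$. Since $\widetilde X$ is simply connected, $\gamma$ bounds a reduced disk diagram $D \to \widetilde X$; I would choose $(\gamma, D)$ to minimize the number of 2-cells of $D$. If $D$ has no 2-cells, then $\gamma$ is nullhomotopic in the graph $\widetilde X^1$ and hence contains a backtrack, contradicting that $\boundary_\p \tilde r$ is immersed. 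Otherwise, glue $\tilde r$ to $D$ along $\gamma$ to form a disk diagram $D' = D \cup_\gamma \tilde r$ whose boundary is the complementary arc of $\boundary_\p \tilde r$. By minimality of $D$, the diagram $D'$ is reduced, so Lemma~\ref{lem:acyclic disks} applied to $D'$ yields acyclicity of $\Gamma(D')$.

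The contradiction should come from the fact that $\tilde r$ has two distinct corners at the interior vertex $p$ of $D'$, both dual to the single vertex $v_{\tilde r} \in \Gamma(D')$. Around $p$, the cyclic sequence of corners of 2-cells of $D'$ revisits $v_{\tilde r}$, while the good stacking assigns distinct heights to each corner and directs the adjacent edges of $\Gamma(D')$ from lower to higher strand, exactly as in the base case of the proof of Lemma~\ref{lem:acyclic disks}. From this data one extracts a directed closed walk through $v_{\tilde r}$ in $\Gamma(D')$, which refines to a genuine directed cycle, contradicting acyclicity. The principal obstacle is this last step: certifying that the reappearance of $v_{\tilde r}$ in the cyclic sequence around $p$ actually forces a directed cycle rather than merely a closed walk whose edge directions alternate. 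This will require a careful analysis of the ascents and descents of the corner-height sequence along the two arcs between the two $\tilde r$-corners, for instance by tracking which of the two $\tilde r$-corners has the greater height and choosing the arc that yields monotonic descent from the relevant local maximum into $v_{\tilde r}$.
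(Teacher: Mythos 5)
Your setup is sound, and in one respect more careful than the paper: you explicitly verify that the good stacking of $X$ lifts to $\widetilde X$ before invoking Theorem~\ref{thm:acyclic universal}, a step the paper leaves implicit. The gluing $D'=D\cup_\gamma \tilde r$ and the claim that minimality of $(\gamma,D)$ forces $D'$ to be reduced are also salvageable, though the justification you omit is not trivial: a cancellable pair between $\tilde r$ and a 2-cell of $D$ lets you cancel it and obtain a diagram for the \emph{complementary} arc $\gamma'$ with strictly fewer 2-cells, which contradicts minimality only because you minimized over all pairs $(\gamma,D)$ rather than over diagrams for a fixed $\gamma$.

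The genuine gap is exactly the step you flag as the principal obstacle, and the remedy you sketch cannot repair it: the corner-height data at $p$ simply does not force a directed cycle. Concretely, if the two $\tilde r$-corners at $p$ have heights $0$ and $5$ while every intervening corner of a 2-cell of $D$ at $p$ has height greater than $5$, then both dual edges at $p$ incident to $v_{\tilde r}$ point \emph{away} from $v_{\tilde r}$, and the closed walk $v_{\tilde r},d_1,\ldots,d_k,v_{\tilde r}$ contains no directed subcycle; this configuration is perfectly consistent with a good stacking, so no purely local analysis at $p$ can succeed (note also that $p$ lies on $\partial_\p D'$, not in its interior, so the base case of Lemma~\ref{lem:acyclic disks} does not even apply to it). What is needed — and what the paper does — is to use goodness \emph{globally}: since $\partial_\p D=\gamma$ is a subpath of $\partial_\p\tilde r$, every 1-cell on $\partial D$ is also traversed by $\partial_\p\tilde r$ in $\widetilde X$. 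One extends a directed arc through the dual of $D$, going forward out of each 2-cell through the 1-cell carrying its low strand and backward through the 1-cell carrying its high strand; such an extension can only halt when the relevant extremal 1-cell lies on $\gamma$, and there goodness — the low (resp.\ high) strand is lowest (resp.\ highest) among \emph{all} strands of $\widetilde W$ over that 1-cell, in particular lower (resp.\ higher) than the strand of $\tilde r$ — yields an edge of $\Gamma(\widetilde X)$ directed into (resp.\ out of) $v_{\tilde r}$. This closes the arc into a directed cycle through $v_{\tilde r}$, contradicting Theorem~\ref{thm:acyclic universal}; the degenerate cases where a terminal 2-cell of $D$ maps to $\tilde r$ itself require the separate cancellable-pair bookkeeping the paper carries out, and the case where $D$ is a single 2-cell is handled by the same extremality argument applied to its high and low 1-cells on $\gamma$. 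Replacing your local argument at $p$ by this extension-and-closing mechanism is what completes the proof.
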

\begin{proof}
As in Lemma~\ref{lem:bislim embedded 2-cells},
we choose a reduced disk diagram $D\rightarrow X$ so that $\partial_\p D$ is a nontrivial proper subpath of $\partial_\p r$. 

If $D$ has an interior 1-cell $e$, then we form a directed arc $P$, passing through $e$, that starts and ends on vertices dual to non-interior 2-cells $r_1$ and $r_2$ of $D$. 
We claim that $P$ extends to a directed cycle in the directed dual graph of $\widetilde X$, contradicting Theorem~\ref{thm:acyclic universal}.
Indeed, if both $r_1$ and $r_2$ do not form a cancellable pair with $r$, this is evident.
If $r_1$ does but $r_2$ does not form a cancellable pair with $r$, then $P$ actually starts on the vertex dual to $r$, but extends to a directed path that ends on $r$ as well.
When both $r_1$ and $r_2$ form a cancellable pair with $r$, $P$ is a directed path that starts and ends on $r$. 
Note that $P$ is nontrivial, as it traverses the edge dual to $e$, which is between a non-cancellable pair of 2-cells.

Finally, suppose $D$ has no interior 1-cell.
Then $D$ is a 2-cell which cannot cancel with $r$, since $\partial_\p D$ is a proper subpath of $\partial_\p r$.
We obtain a directed cycle of length two in the dual graph, as in the case above where neither $r_1$ or $r_2$ form a cancellable pair with $r$, again contradicting Theorem~\ref{thm:acyclic universal}.
\end{proof}

\begin{rem}
An acyclic digraph has a partial order on its vertices where
$a\prec b$  if and only if there is a directed path from $a$ to $b$.
Thus
Theorem~\ref{thm:acyclic universal} provides a partial order on the 2-cells of $\widetilde X$. 
\end{rem}

\begin{lem}[$GS\Rightarrow BS$]
\label{lem:GS is BS}
If $X$ has a good-stacking, then $X$ is bislim.
\end{lem}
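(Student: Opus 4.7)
The plan is to extract the bislim structure from the data of the good stacking. First, I would lift the good stacking on $X$ to a $\pi_1 X$-equivariant good stacking on $\widetilde X$ by pulling back heights: assign each $\tilde x \in \widetilde W$ the height $\rho_{\reals}(\Psi(p(\tilde x)))$, where $p\colon \widetilde W \to W$ is induced by the universal cover. The key verification is injectivity of the lifted map: at each 1-cell $\tilde e$ of $\widetilde X$, the incident boundary traversals of 2-cells in $\widetilde X$ correspond bijectively to the boundary traversals of the projected 1-cell $p(\tilde e)$ in $X$, so the heights assigned at $\tilde e$ are distinct (inheriting distinctness from the good stacking on $X$). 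The good-stacking conditions pull back directly.

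With the lifted good stacking in hand, Corollary~\ref{cor:embeds} guarantees that $\partial_\p \tilde r$ is an embedded cycle for each 2-cell $\tilde r$ of $\widetilde X$. I would then define $\tilde r^+$ (resp.~$\tilde r^-$) to be the unique 1-cell of $\widetilde X$ containing the high point $h_{\tilde r}$ (resp.~low point $\ell_{\tilde r}$); if the specified point lies on a 0-cell, perturb the stacking slightly so the choice is unambiguous. Embeddedness of $\partial_\p \tilde r$ ensures $\tilde r^+$ is traversed exactly once, verifying condition~(2) of Definition~\ref{defn:slim}, and condition~(3) is immediate from equivariance of the lifted stacking.

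For the preorder, Theorem~\ref{thm:acyclic universal} gives that the directed dual graph $\Gamma$ of $\widetilde X$ is acyclic, and directed reachability defines a $\pi_1 X$-equivariant strict partial order $\prec_\Gamma$ on its 2-cells. I declare
\[
\tilde r_1^+ \prec \tilde r_2^+ \quad\text{iff}\quad \tilde r_2 \prec_\Gamma \tilde r_1,
\]
and extend to all 1-cells of $\widetilde X$ by placing 1-cells not of the form $\tilde r^+$ at the bottom, as in Remark~\ref{rem:pre/partial order}. This yields a $\pi_1 X$-invariant preorder. To verify~(4), suppose $\tilde r_1^+ \subset \partial \tilde r_2$ with $\tilde r_1 \neq \tilde r_2$: at this shared 1-cell the side of $\tilde r_1$ is highest by definition of $\tilde r_1^+$, so the edge of $\Gamma$ dual to $\tilde r_1^+$ is directed from $\tilde r_2$ (lower) to $\tilde r_1$ (higher), giving $\tilde r_2 \prec_\Gamma \tilde r_1$ and hence $\tilde r_1^+ \prec \tilde r_2^+$. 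Condition~(5) is analogous: if $\tilde r_2^- \subset \partial \tilde r_1$, then at this 1-cell the side of $\tilde r_2$ is lowest, so the dual edge again runs from $\tilde r_2$ to $\tilde r_1$.

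The main obstacle is the lifting step: verifying that the pulled-back heights on $\widetilde W$ give an injection. This reduces to the combinatorial fact that the 2-cells of $\widetilde X$ incident to a given 1-cell $\tilde e$ correspond bijectively to the traversals of $p(\tilde e)$ in the attaching maps of $X$, which follows from the definition of the universal cover; once this is in place, everything else flows from Corollary~\ref{cor:embeds} and Theorem~\ref{thm:acyclic universal}.
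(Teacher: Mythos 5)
Your proof is correct and follows essentially the same route as the paper's: lift the stacking equivariantly to $\widetilde X$ (a step the paper leaves implicit), invoke Theorem~\ref{thm:acyclic universal} to get a $\pi_1X$-invariant partial order via reachability in the acyclic directed dual, invoke Corollary~\ref{cor:embeds} for the traversed-exactly-once condition, and define $r^\pm$ by the extremal sides of the stacking. The only difference is cosmetic: your conventions are the mirror image of the paper's ($r^+$ at the highest side with the reachability order reversed, versus the paper's $r^+$ at the lowest side with the order taken directly), and both verify conditions~\eqref{slim:distinct} and~\eqref{bislim:bislim} of Definition~\ref{defn:slim} in the same way.
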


\begin{proof}
 Theorem~\ref{thm:acyclic universal} provides a $\pi_1X$-invariant partial order on the 2-cells of $\widetilde X$. This induces a $\pi_1X$-invariant partial order on the 1-cells as follows:
For each 1-cell $e$ of $\widetilde X$,  declare its position in the partial order to be
the same as the 2-cell whose side over $e$ is lowest.

For a 2-cell $r$, declare $r^+$ to be a 1-cell on $\boundary r$ where the side of $r$ is lowest among all sides.
Declare $r^-$ to be a 1-cell over which $r$ has a highest side.
Moreover, we do this $\pi_1X$-equivariantly.

Corollary~\ref{cor:embeds} asserts
that  $\boundary_\p r$ embeds in  $\widetilde X$,
and so
$r$ traverses $r^+$ exactly once.

If $r_1^+=r_2^+$ then $r_1=r_2$, since at most one 2-cell has a lowest side over a 1-cell.

If $r_2^- \subset \boundary r_1$
  then
there is an edge directed from the vertex dual to $r_1$ to the vertex dual to $r_2$,
and hence $r_1^+\prec r_2^+$.
Similarly, if $r_1^+ \subset \boundary r_2$ then $r_1^+\prec r_2^+$
since there is an edge directed from the vertex dual to $r_1$ to the vertex dual to $r_2$.
\end{proof}

\begin{exmp}\label{exmp:slim not GS}
Note that there are slim complexes without a good stacking.
The easiest example is $\langle a,b \mid a, ab\rangle$.
We order the 1-cells by $a\prec b$.
We declare $r_1^+=a$.
We declare $r_2^+=b$.
\end{exmp}

\section{Totally-ordered Invariant Staggering}

The goal of this section is to introduce a generalization of staggered 2-complexes. 
Later, in Section~\ref{sec:pes}, we will offer a weaker but more involved version.

A \emph{staggered 2-complex} is a 2-complex $X$ whose 2-cells have immersed attaching maps, such that there is a total ordering
on the set of 2-cells, and a total ordering on a subset $\mathcal E$ of the 1-cells with the following properties:
Each 2-cell $\alpha$ has an $\mathcal E$ 1-cell in $\boundary \alpha$.
If $\max(\alpha)$ and $\min(\alpha)$ are the maximal and minimal $\mathcal E$ 
1-cells in $\boundary\alpha$, then
$\alpha\prec \beta$ implies $\max(\alpha)\prec \max(\beta)$ and $\min(\alpha)\prec \min(\beta)$.

Every 2-complex associated to the presentation of a one-relator group $\langle a,b \mid w\rangle$ is staggered, and the notion of staggered arises naturally in the theory of one-relator groups \cite{LS77, Howie87}.

\begin{rem}\label{rem:staggered implies GS,BS}
If $X$ is a staggered 2-complex without torsion, then $X$ admits a good stacking \cite{LouderWilton2014}, and $X$ is bislim \cite[Prop~2.4]{HelferWise2015}.
(The proof of bislimness there 
is correct using Definition~\ref{defn:slim}, but must be altered to ensure unique strict maximality, as in Remark~\ref{rem:2bislims}.)
\end{rem}

We begin by offering the following preliminary generalization of staggered:

\begin{defn}[$TIS$]\label{defn:TIS}
A \emph{totally ordered invariant staggering} of a 2-complex $X$, whose 2-cells have immersed attaching maps, consists of the following:
\begin{enumerate}
\item\label{item:TES order 2-cell} 
A $\pi_1X$-invariant ordering $\prec$ on  2-cells of $\widetilde X$.
\item\label{item:TES order} 
A $\pi_1X$-invariant ordering $\prec^+$ on a subset $\mathcal E^+$ of the 1-cells of $\widetilde X$.
\item A $\pi_1X$-invariant ordering $\prec^-$ on a subset $\mathcal E^-$ of the 1-cells of $\widetilde X$.
\item\label{cond:max/min} For each 2-cell $\alpha$, there are $\mathcal E^\pm$ 1-cells in $\partial \alpha$. \\
Let $\max^+(\alpha)$ denote the $\prec^+$-highest 1-cell in $\boundary \alpha$.\\
Let $\min^-(\alpha)$ denote the $\prec^-$-lowest  1-cell in $\boundary \alpha$. 
\item\label{cond:max traversed once} The 1-cell $\max^+(\alpha)$ is traversed exactly once by $\boundary_\p \alpha$.
\item\label{cond:consistent} If  $\alpha\prec \beta$ then
$\max^+(\alpha)\prec^+\max^+(\beta)$ and
$\min^-(\alpha)\prec^- \min^-(\beta)$.
\end{enumerate}
\end{defn}

\begin{exmp}
\label{exmp:staggered}
A staggered 
2-complex $X$ without torsion yields a totally ordered invariant staggering as follows: 
We begin by choosing a left-order on $\pi_1X$, whose existence holds by Burns-Hale since $\pi_1X$ is locally indicable \cite{BurnsHale72,Brodskii84,Howie00}.
We now declare the ordering $\prec$ on 2-cells of $\widetilde X$.
If $\widetilde\alpha$ and $\widetilde\beta$ are 2-cells in distinct $\pi_1X$-orbits, declare $\widetilde\alpha\prec \widetilde\beta$ when their images satisfy $\alpha \prec \beta$ in the staggering of $X$.
If $\widetilde \beta = g\cdot \widetilde \alpha$, for $g\in\pi_1X$, declare $\widetilde \alpha \prec \widetilde \beta$ if $g>1$ in the chosen left-order.
The relation $\prec$ described here induces a total-order on the 2-cells. 

Now let $\mathcal E$ be the ordered 1-cells of $X$, and let $\mathcal E^+=\mathcal E^-$ be their preimages. 
Let $\widetilde a$, $\widetilde b$ be in $\mathcal E^\pm$. 
Then $\widetilde a \prec^+ \widetilde b$ and $\widetilde a \prec^- \widetilde b$ provided their images satisfy $a\prec b$.
(Condition~\eqref{cond:max traversed once} holds as 2-cells of the staggered $X$ embed in $\widetilde X$ by the \emph{Freiheitsatz} for staggered 2-complexes, together with the fact that proper subwords of a relator are nontrivial in one-relator groups \cite{Weinbaum71}.)

Of course, by left-orderability, $TIS$ is not possible in the presence of torsion. 
\end{exmp}

\begin{rem}
The converse implication of Example~\ref{exmp:staggered} does not hold. Indeed, Figure~\ref{fig:not staggered} provides an example whose quotient 2-complex $X$ has fundamental group $\langle a,b,c \ | \ abc^{-1},bac^{-1}\rangle$. Because both relator words contain each generator, the complex $X$ is not staggered. On the other hand, $X$ has $TIS$. Indeed, as pictured, one can provide a total order on all cells by projecting the barycenters orthogonally to a line of irrational slope, 
and the conditions of $TIS$ can be verified as an exercise. 

In the example pictured in Figure~\ref{fig:not staggered}, one could choose two distinct irrationally-sloped lines to obtain two distinct orders $\prec^-$ and $\prec^+$.

If instead one began with a line of rational slope, then the comparisons obtained by projection would only yield a partial order. 
For most rational slopes, one obtains an \emph{invariant staggering}, which we define carefully and investigate in Section~\ref{sec:pes}.
\end{rem}

\begin{figure}
\centering
\includegraphics[width=6cm]{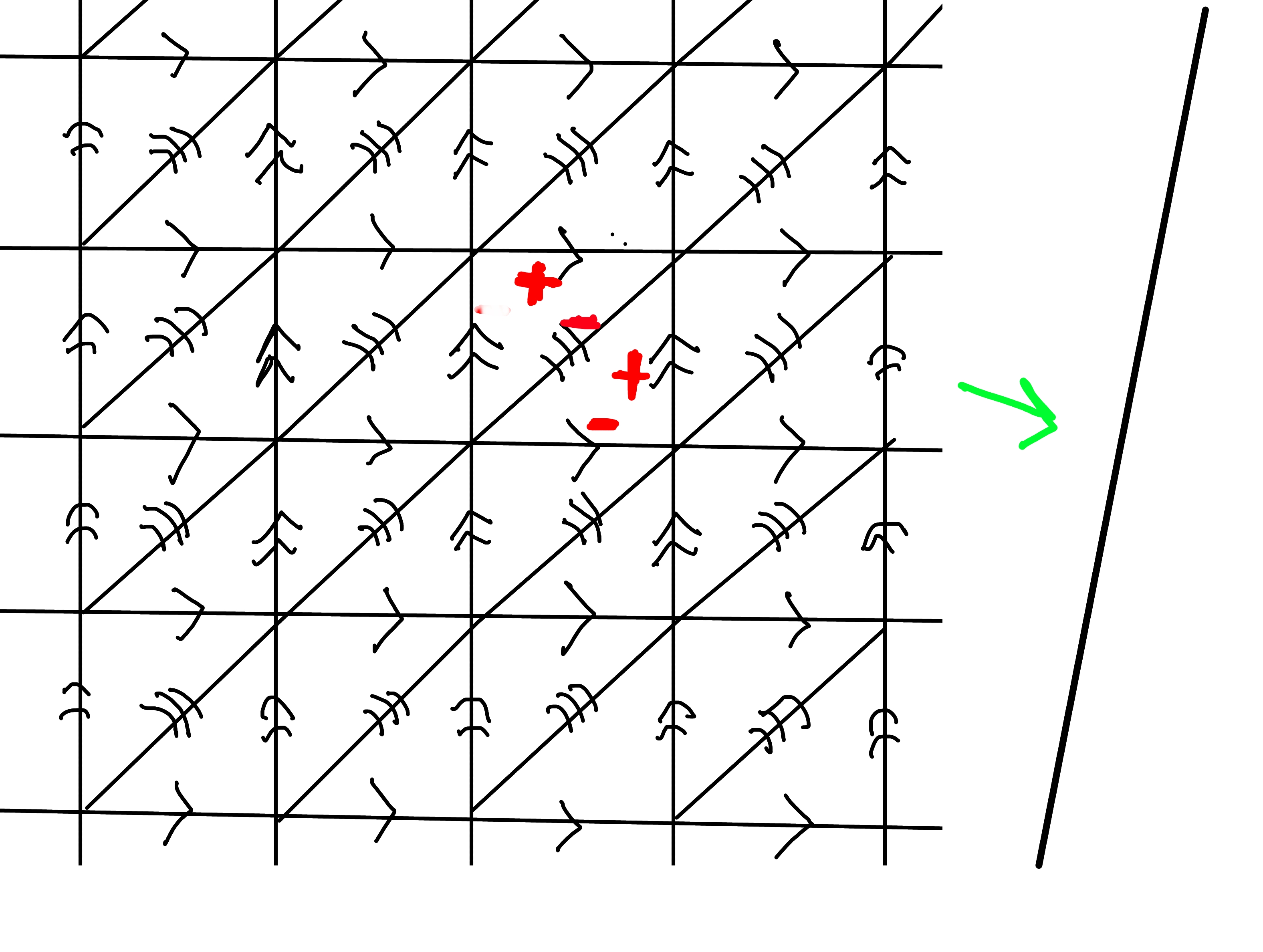}
\caption{A 2-complex with $TIS$ but whose quotient is not staggered.}
\label{fig:not staggered}
\end{figure}

\begin{prob}
The classical case uses a single ordering on a subset of the 1-cells of $X$.
Does the classical case generalize to two distinct orderings on the 1-cells of $X$?
Is there an example that is staggered using two orderings but not one? 
\end{prob}

\begin{rem}
In the motivating case, $\prec^+$ and $\prec^-$ are identical.
Warning: In general, we do not know that
$\min^+(\alpha) \prec^+ \min^+(\beta)$ and 
$\max^-(\alpha) \prec^- \max^-(\beta)$.
(Otherwise, we would have used just one ordering...!)
\end{rem}

\begin{lem}
\label{lem:TES 2-cells embed}
If $X$ is $TIS$, then for each 2-cell $r\rightarrow \widetilde X$, the path $\partial_\p r \rightarrow X$ is an embedding.
\end{lem}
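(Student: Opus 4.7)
The plan is to argue by contradiction, adapting the strategy of Lemma~\ref{lem:bislim embedded 2-cells}. Suppose $\partial_\p r$ is not a simple cycle in $\widetilde X$ and choose a reduced disk diagram $D \to \widetilde X$ with $\partial_\p D$ a nontrivial proper subpath of $\partial_\p r$. Since $\partial_\p r$ is immersed, $D$ cannot have zero 2-cells (else $\partial_\p D$ would reduce to trivial via backtracks, forcing an immediate backtrack in $\partial_\p r$), so $D$ has at least one 2-cell.

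Let $\mathcal{S}$ consist of $r$ together with the images in $\widetilde X$ of all 2-cells of $D$, and let $\alpha^*$ be the $\prec$-maximum of $\mathcal{S}$. By condition~\eqref{cond:max traversed once}, each 2-cell $\tilde \alpha$ of $D$ mapping to $\alpha^*$ has a unique 1-cell $e$ on its boundary that maps to $\max^+(\alpha^*)$. If $e$ is interior to $D$, its other side lies on a 2-cell $\tilde \beta$ of $D$ with image $\beta \in \mathcal{S}$: if $\beta = \alpha^*$, then $\tilde \alpha$ and $\tilde \beta$ form a cancellable pair, violating reducedness; if $\beta \prec \alpha^*$, then condition~\eqref{cond:consistent} yields the contradiction $\max^+(\alpha^*) \preceq^+ \max^+(\beta) \prec^+ \max^+(\alpha^*)$. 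So $e$ lies on $\partial D$, whence $\max^+(\alpha^*)$ is traversed by $\partial_\p r$, giving $\max^+(\alpha^*) \preceq^+ \max^+(r)$. Provided $\alpha^* \neq r$, this contradicts $\max^+(r) \prec^+ \max^+(\alpha^*)$ from \eqref{cond:consistent}.

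It remains to treat the case $\alpha^* = r$. Run the dual argument on the $\prec$-minimum $\delta^* \in \mathcal{S}$ using $\min^-$ and $\prec^-$ in place of $\max^+$ and $\prec^+$: either $\delta^* \neq r$ and we derive a contradiction as above, or $\delta^* = r$, which forces every 2-cell of $D$ to map to $r$. In this remaining subcase, condition~\eqref{cond:max traversed once} together with reducedness forces the 1-cells of $D$ mapping to $\max^+(r)$ to be distinct across distinct 2-cells of $D$, and to not be interior to $D$. Counting traversals of $\max^+(r)$ along $\partial_\p D$ against $\partial_\p r$ (which traverses $\max^+(r)$ exactly once) forces $D$ to have a single 2-cell $\tilde \alpha$; immersion of $\partial_\p r$ then rules out spurs in $D$, so $\partial_\p D = \partial_\p \tilde\alpha$ maps onto all of $\partial_\p r$, contradicting properness.

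The main obstacle is this degenerate subcase $\alpha^* = \delta^* = r$, where the total orderings no longer separate the relevant 2-cells; one has to shift from the ordering argument to a combinatorial counting of traversals of $\max^+(r)$, leveraging reducedness and condition~\eqref{cond:max traversed once} to prohibit any accumulation of 2-cells in $D$.
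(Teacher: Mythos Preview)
Your argument is correct. The max/min analysis on $\mathcal S$ is exactly the right idea, and your treatment of the degenerate case $\alpha^*=\delta^*=r$ via the traversal count of $\max^+(r)$ is clean: the key point, which you use correctly, is that since $\max^+(r)$ is traversed exactly once by $\partial_\p r$, any two 2-cells of $D$ mapping to $r$ and sharing a 1-cell over $\max^+(r)$ must cancel, so all such 1-cells land on $\partial D$ and are counted by the proper subpath $\partial_\p D$.

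Your route, however, differs substantially from the paper's. The paper does not run a disk-diagram argument at all; instead it observes that $TIS$ implies $X$ is \emph{2-collapsing} in the sense of \cite[Def.~5.7]{GasterWise2019}: in any finite subcomplex $Y\subset\widetilde X$ with at least two 2-cells, the $\prec$-maximal 2-cell $\alpha$ has $\max^+(\alpha)$ as a free face, and the $\prec$-minimal 2-cell $\beta$ has $\min^-(\beta)$ as a free face, giving two independent collapses. The embedding of 2-cells then follows by citing \cite[Lem.~5.8, Cor.~3.7]{GasterWise2019}. Both arguments exploit the same extremal-cell mechanism, but the paper packages it as a structural property and defers the diagrammatic work to an external reference, whereas you unfold that work explicitly. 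Your version has the advantage of being self-contained within this paper and of making transparent exactly where Condition~\eqref{cond:max traversed once} is used; the paper's version has the advantage of isolating a reusable intermediate notion (2-collapsing) and a shorter local proof.
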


\begin{proof}
A $2$-complex $X$ is \emph{2-collapsing} \cite[Def.~5.7]{GasterWise2019}
if the following holds:
For each compact subcomplex $Y \subset \widetilde X$, 
if $Y$ has at least $m\leq 2$ 2-cells, then $Y$ has at least $m$ distinct collapses along free faces.
We will show that $TIS$ implies that $X$ is 2-collapsing, and the desired conclusion follows from \cite[Lem.~5.8, Cor.~3.7]{GasterWise2019}.

Let $Y\subset X$ be a compact subcomplex. 
If $Y$ has at least two 2-cells, then we may choose $\alpha$ and $\beta$ to be the $\prec$-maximal and $\prec$-minimal 2-cells, respectively.
Now $\alpha$ collapses along $\max^+(\alpha)$ by Condition~\eqref{cond:consistent}, and similarly $\beta$ collapses along $\min^-(\beta)$, so $Y$ has two collapses.
If $Y$ has a single 2-cell $\alpha$, then it collapses along $\max^+(\alpha)$. 
\end{proof}

\begin{defn}[$TBS$]
\label{def:TBS}
A bislim structure on $X$ is \emph{totally ordered}
if the partial order on the set $\{\alpha^+\}$  is a total ordering.
\end{defn}

\begin{lem}[$TBS \Leftrightarrow TIS$]\label{lem:TBS gives TIS}
$X$ has a totally ordered bislim structure if and only if $X$ has a totally ordered invariant staggering.
\end{lem}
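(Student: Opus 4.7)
The plan is to carry out both directions by a direct transcription of data, with the key preliminary observation in each direction being that the maps $r\mapsto r^+$ and $r\mapsto r^-$ are injective on $2$-cells of $\widetilde X$.

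For $TBS \Rightarrow TIS$: Set $\mathcal{E}^+ = \{r^+\}$ and $\mathcal{E}^-=\{r^-\}$, and equip $\mathcal{E}^+$ with the total order $\prec^+$ from $TBS$. Injectivity of $r\mapsto r^+$: if $r_1^+=r_2^+$ with $r_1\neq r_2$, then $r_1^+\subset \partial r_2$ triggers condition (\ref{slim:distinct}), forcing $r_1^+\prec r_2^+$, a contradiction. Injectivity of $r\mapsto r^-$: if $r_1^-=r_2^-$ with $r_1\neq r_2$, then $r_2^-\subset \partial r_1$ and $r_1^-\subset \partial r_2$ trigger condition (\ref{bislim:bislim}) in both directions, again a contradiction. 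Transport $\prec^+$ to a total order $\prec$ on $2$-cells via the bijection $r\leftrightarrow r^+$, and then transport $\prec$ to a total order $\prec^-$ on $\mathcal{E}^-$ via $r\leftrightarrow r^-$. Lemma~\ref{lem:unique strictly maximal} yields $\max^+(\alpha)=\alpha^+$, and the analogous statement $\min^-(\alpha)=\alpha^-$ follows from the parallel argument using condition (\ref{bislim:bislim}). The remaining axioms of $TIS$ are then immediate.

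For $TIS \Rightarrow TBS$: Set $r^+=\max^+(r)$ and $r^-=\min^-(r)$. Injectivity now follows from condition (\ref{cond:consistent}) together with totality of $\prec$ on $2$-cells: if $r_1\neq r_2$ and $r_1^+=r_2^+$, one of $r_1\prec r_2$ or $r_2\prec r_1$ must hold, and either forces $\max^+(r_1)$ and $\max^+(r_2)$ to be $\prec^+$-distinct, a contradiction. Define $\preceq$ on $1$-cells of $\widetilde X$ by declaring $e_1\preceq e_2$ iff $e_1=e_2$, or both $e_1,e_2$ lie in $\{r^+\}$ with $e_1\prec^+ e_2$. This is a $\pi_1X$-invariant preorder that restricts to the total order $\prec^+$ on $\{r^+\}$, settling the totality requirement of $TBS$.

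The main verification is the bislim condition (\ref{bislim:bislim}): given $r_2^-\subset \partial r_1$ with $r_1\neq r_2$, the fact that $r_1^-=\min^-(r_1)$ gives $r_1^-\preceq^- r_2^-$, and the case $r_1^-=r_2^-$ is excluded by injectivity, so $r_1^-\prec^- r_2^-$. The contrapositive of (\ref{cond:consistent}) combined with totality of $\prec$ on $2$-cells then yields $r_1\prec r_2$, hence $r_1^+\prec^+ r_2^+$, as required. This mediation between the $+$ and $-$ orderings through the total order on $2$-cells is the main delicate step; verifying condition (\ref{slim:distinct}) from $r_1^+\subset\partial r_2$ is easier since $r_2^+=\max^+(r_2)$ is directly comparable to $r_1^+\in\mathcal{E}^+$, and the remaining axioms are immediate.
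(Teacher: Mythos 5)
Your proof is correct and takes the same route as the paper's: identify $r^+$ with $\max^+(r)$ and $r^-$ with $\min^-(r)$, and transport the total orders through these correspondences. You are, however, more careful than the paper on two points worth recording. First, you prove injectivity of $r\mapsto r^+$ and $r\mapsto r^-$, which the paper never states but tacitly needs: without it the transported orders $\prec$ and $\prec^-$ in the $TBS\Rightarrow TIS$ direction are not well defined, and the strict comparisons in the $TIS\Rightarrow TBS$ direction do not follow. Second, your verification of Condition~\ref{defn:slim}.\eqref{bislim:bislim} runs in the correct direction: from $r_2^-\subset\partial r_1$ you deduce $r_1^-\prec^- r_2^-$ (minimality of $r_1^-=\min^-(r_1)$ among $\mathcal{E}^-$ 1-cells of $\partial r_1$, strict by injectivity), then $r_1\prec r_2$ (totality of $\prec$ plus the contrapositive of Condition~\ref{defn:TIS}.\eqref{cond:consistent}), then $r_1^+\prec^+ r_2^+$; the paper's corresponding chain, ``$s^-\prec^- r^-$ and so $s\prec r$ and so $s^+\prec^+ r^+$'' from the hypothesis $s^-\subset\partial r$, has every inequality reversed (minimality of $r^-$ in $\partial r$ gives $r^-\prec^- s^-$, not the other way around) and as written concludes the opposite of what Condition~\ref{defn:slim}.\eqref{bislim:bislim} requires, so your version is the one that actually closes the argument. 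Your explicit extension of $\prec^+$ to a preorder on all 1-cells of $\widetilde X$, as Definition~\ref{defn:slim}.\eqref{slim:order} demands, is likewise a detail the paper leaves implicit.
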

\begin{proof}[Proof of $TBS\Rightarrow TIS$]
Let $\mathcal E^+ =\{\alpha^+ : \alpha \in \text{$2$-cells} \}$.
Let $\mathcal E^- =\{\alpha^- : \alpha \in \text{$2$-cells} \}$.

Let  $<$ denote the order on the distinguished $\alpha^+$ 1-cells
in the bislim structure.
Declare $\alpha\prec \beta$ and
 $\alpha^+\prec^+ \beta^+$ and
  $\alpha^-\prec^- \beta^-$ precisely when $\alpha^+ <  \beta^+$.

Observe that $\max^+(\alpha)=\alpha^+$ and hence
$\max^+(\alpha) \prec^+ \max^+(\beta)$ when $\alpha \prec \beta$.

Observe that $\min^-(\alpha)=\alpha^-$ since
if $\beta^-$ is traversed by $\boundary \alpha$ then $\alpha^+ < \beta^+$, and hence
$\alpha^- \prec^- \beta^- $.
\end{proof}
\begin{proof}[Proof of $TIS\Rightarrow TBS$]
Suppose that $X$ has a totally ordered invariant staggering.

Declare $r^+=\max^+(r)$ and $r^-=\min^-(r)$ for each 2-cell $r$.

Condition~\ref{defn:slim}.\eqref{slim:trav once} is implied by Lemma~\ref{lem:TES 2-cells embed}.

If $r^+\subset \boundary s$ and $r\neq s$
then Condition~\ref{defn:TIS}.\eqref{cond:max/min} implies $r^+\prec^+ s^+$. 

If $s^-\subset \boundary r$ and $r\neq s$
then $s^-\prec^- r^-$ and so $s\prec r $ and so $s^+\prec^+ r^+$.

This demonstrates that $TIS\Rightarrow BS$, but in fact the bislim structure contructed above is totally ordered by Condition~\ref{defn:TIS}.\eqref{item:TES order}.
\end{proof}

\begin{prob}
\label{prob:BS implies TIS}
Is $TIS$ equivalent to $BS$? (Apologetically, for 2-complexes with countably many 2-cells.)
\end{prob}

A group $G$ is \emph{Szpilrajneous} if a $G$-invariant  partial ordering on a set
extends to a $G$-invariant total ordering on that set, provided $G$ acts freely.
The trivial group is Szpilrajneous by \cite{Szpilrajn1930}, as a consequence of Zorn's Lemma.
Any free-abelian group is Szpilrajneous by Pruss \cite[Thm~2]{Pruss2014}.
The Klein bottle fundamental group is not Szpilrajneous \cite[discussion after Thm.~9]{DDHPV2007}. 

\begin{prob}
Which groups are Szpilrajneous?
\end{prob}

For Szpilrajneous groups, Problem~\ref{prob:BS implies TIS} has a positive answer.

\begin{cor}  Suppose $G$ is Szpilrajneous.
Then for the presentation 2-complex of $G$ we have $BS \Rightarrow TBS$.
Hence $BS\Leftrightarrow TIS$ when $G$ is Szpilrajneous.
\end{cor}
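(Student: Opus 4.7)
The plan is to establish $BS \Rightarrow TBS$ using the Szpilrajneous hypothesis; the equivalence $BS \Leftrightarrow TIS$ then follows by combining this with Lemma~\ref{lem:TBS gives TIS} (which supplies $TBS \Leftrightarrow TIS$) together with the trivial inclusion $TBS \Rightarrow BS$.

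Starting from a bislim structure on $X$, I would first apply Remark~\ref{rem:pre/partial order} to pass to a $\pi_1X$-invariant partial order $\prec$ on the subset $\mathcal E^+ = \{r^+ : r \text{ a 2-cell of } \widetilde X\}$. The assignment $r \mapsto r^+$ is injective: if distinct $r_1,r_2$ satisfied $r_1^+ = r_2^+$, then $r_1^+ \subset \boundary r_2$ would force $r_1^+ \prec r_2^+$ by Condition~\ref{defn:slim}.\eqref{slim:distinct}, a contradiction. Hence $\mathcal E^+$ is in bijection with the 2-cells of $\widetilde X$. Since $\widetilde X$ is the universal cover of the presentation 2-complex, the deck action of $G = \pi_1X$ is free on points of $\widetilde X$ and hence free on its cells, transferring under the bijection to a free $G$-action on $\mathcal E^+$.

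With a free action in hand, I would invoke the Szpilrajneous hypothesis to extend $\prec$ to a $G$-invariant total order $<$ on $\mathcal E^+$. Retaining the distinguished 1-cells $r^\pm$ from the original bislim structure and replacing $\prec$ by $<$ yields a totally ordered bislim structure: equivariance and the traversal condition are untouched, while Conditions~\ref{defn:slim}.\eqref{slim:distinct} and \ref{defn:slim}.\eqref{bislim:bislim} are preserved because their containment hypotheses already force $r_1^+ \prec r_2^+$ in the original order, hence $r_1^+ < r_2^+$ in the extension. This completes $BS \Rightarrow TBS$.

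The only nontrivial input is the Szpilrajneous extension step, which is supplied by hypothesis. The remainder is bookkeeping: the key observation is that the axioms of Definition~\ref{defn:slim} are \emph{upward-closed} under replacing the preorder by a compatible extension, since their consequences are strict comparisons $r_1^+ \prec r_2^+$ which persist in any order extension.
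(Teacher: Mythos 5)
Your proposal is correct and follows essentially the same route as the paper: pass to a partial order on $\mathcal E^+$ via Remark~\ref{rem:pre/partial order}, extend it to a $\pi_1X$-invariant total order by Szpilrajneousness, and conclude via $TBS\Rightarrow BS$ together with Lemma~\ref{lem:TBS gives TIS}. Your additional verifications---that $r\mapsto r^+$ is injective, that the deck action on $\mathcal E^+$ is free (so the Szpilrajneous hypothesis genuinely applies), and that the bislim axioms persist under order extension---are details the paper's three-line proof leaves implicit, and they are all accurate.
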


\begin{proof}
In a bislim structure on $X$, the set $\{r^+ :  r \in \text{2-cells}\}$ is partially ordered.
This extends to a total ordering on the full set of 1-cells by Szpilrajneousness.
Finally, $TBS\Rightarrow BS$, and we conclude with Lemma~\ref{lem:TBS gives TIS}.
\end{proof}

\section{Invariant Staggering}
\label{sec:pes}
We conclude by introducing invariant staggerings, a relaxation of Definition~\ref{defn:TIS}.

\newcommand{\Max}{\mathrm{Max}}
\newcommand{\Min}{\mathrm{Min}}	

\begin{defn}[$IS$]
\label{def:ES}
	An \emph{invariant staggering} on a 2-complex $X$ whose 2-cells have immersed attaching maps consists of the following:
	\begin{enumerate}
\item A $\pi_1 X$-invariant partial order $\prec$ on 2-cells of $\widetilde{X}$;

\item A $\pi_1 X$-invariant partial order $\prec^+$ on a subset $\mathcal{E}^+$ of the 1-cells of $\widetilde{X}$;
		
\item\label{cond:<-} A $\pi_1 X$-invariant partial order $\prec^-$ 
    on a subset $\mathcal{E}^-$ of the 1-cells of $\widetilde{X}$
    
\item \label{cond:pes-sufficiency} 
$\boundary \alpha$ contains 1-cells of $\mathcal E^{\pm}$ for each 2-cell $\alpha$.

\noindent
Let $\Max^+(\alpha)$ denote the set of elements in $\partial \alpha \cap \mathcal{E}^+$
that are $\prec^+$-maximal.\\
\noindent
Let $\Min^-(\alpha)$ denote the set of elements in $\partial \alpha\cap \mathcal{E}^-$
that are $\prec^-$-minimal.
		
\item \label{cond:pes-unique-traversal} 
Each 1-cell of  $\Max^+(\alpha)$ is traversed exactly once by $\boundary_\p \alpha$, for each $\alpha$.

	\item \label{cond:pes-two-cell-order}If $\alpha \prec \beta$ then $\Max^+(\alpha) \prec^+ \Max^+(\beta)$ and $\Min^-(\alpha) \prec^- \Min^-(\beta)$, where $S < T$ denotes that $s < t$ for each $s \in S$ and $t \in T$.
		
\item \label{cond:pes-max} If $\alpha\neq \beta$
and a 1-cell of $\Max^+(\alpha)$ lies in $\partial \beta$, then $\alpha \prec \beta$.
		
\item \label{cond:pes-min} 
If $\alpha\neq \beta$ and a $1$-cell of $\Min^-(\alpha)$ lies in $\partial \beta$, then $\beta \prec \alpha$.

\end{enumerate}
\end{defn}

\begin{rem}
In the definition above, we could also have replaced $\Max^+(\alpha)$ and $\Min^-(\alpha)$ with chosen extremal 1-cells (chosen $\pi_1X$-equivariantly).
The proofs below would be nearly unchanged, and the conditions would look slightly simpler though somewhat less natural.
\end{rem}

\begin{rem}
Let $ X$ be a 2-complex with $TIS$, and let $\dot X$ be obtained from $X$ by raising 2-cells to proper powers. The totally ordered invariant staggering of $X$ almost induces an invariant staggering of $\dot X$, with the caveat that Conditions~\ref{def:ES}.\eqref{cond:pes-max}~and~\ref{def:ES}.\eqref{cond:pes-min} become slightly problematic, cf.~Remark~\ref{rem:torsion}.
Indeed, consider the map $\widetilde {\dot X} \rightarrow \widetilde X$, and order cells in the domain by their images in the target. 
In particular, staggered 2-complexes with torsion are (almost) $IS$. 
Note that each 2-cell now has multiple maximum 1-cells which are mutually incomparable.
\end{rem}

We observe that $IS$ is weaker than $TIS$:

\begin{lem}[$TIS\Rightarrow IS$]
\label{lem:TES to IS}
If $X$ has a totally ordered invariant staggering, then $X$ has an invariant staggering.
\end{lem}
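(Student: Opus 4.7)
The plan is to transport the $TIS$ total orderings directly to become the $IS$ partial orderings. I would take the three $IS$ partial orders $\prec, \prec^+, \prec^-$ on the 2-cells of $\widetilde X$, $\mathcal E^+$, and $\mathcal E^-$ to be exactly the corresponding $TIS$ total orders; every total order is a partial order, and $\pi_1 X$-invariance is inherited, so IS Conditions (1)--(3) are immediate. The sets $\mathcal E^\pm$ are unchanged, and the existence of $\mathcal E^\pm$ 1-cells on $\boundary \alpha$ required by IS Condition~\eqref{cond:pes-sufficiency} is precisely the content of $TIS$ Condition~\ref{defn:TIS}.\eqref{cond:max/min}.

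Because $\prec^\pm$ are total, the extremal sets of Definition~\ref{def:ES} collapse to singletons: $\Max^+(\alpha) = \{\max^+(\alpha)\}$ and $\Min^-(\alpha) = \{\min^-(\alpha)\}$, where $\max^+, \min^-$ denote the unique $TIS$-maximum and $TIS$-minimum. Under this identification, IS Condition~\eqref{cond:pes-unique-traversal} is exactly $TIS$ Condition~\ref{defn:TIS}.\eqref{cond:max traversed once}, and IS Condition~\eqref{cond:pes-two-cell-order} is $TIS$ Condition~\ref{defn:TIS}.\eqref{cond:consistent} (the set-wise comparison $S < T$ reduces to a comparison of singletons).

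The only genuinely new content is IS Conditions~\eqref{cond:pes-max} and~\eqref{cond:pes-min}, which have no direct analog in Definition~\ref{defn:TIS}. I would establish Condition~\eqref{cond:pes-max} by contradiction: suppose $\alpha \neq \beta$ and $\max^+(\alpha) \subset \boundary \beta$, but $\alpha \not\prec \beta$. Totality of $\prec$ forces $\beta \prec \alpha$, so $TIS$ Condition~\ref{defn:TIS}.\eqref{cond:consistent} yields $\max^+(\beta) \prec^+ \max^+(\alpha)$. On the other hand, $\max^+(\alpha) \in \boundary \beta \cap \mathcal E^+$ and $\max^+(\beta)$ is the $\prec^+$-maximum of this set, so $\max^+(\alpha) \preceq^+ \max^+(\beta)$, a contradiction. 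Hence $\alpha \prec \beta$. The symmetric argument using $\min^-$ in place of $\max^+$ yields Condition~\eqref{cond:pes-min}.

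I do not anticipate a serious obstacle: the statement essentially repackages total-order data as partial-order data, and the two conditions that look new fall out painlessly from totality of $\prec$ combined with the $TIS$ consistency condition. The one subtlety worth flagging in the write-up is the collapse of the set-comparisons $\Max^+(\alpha) \prec^+ \Max^+(\beta)$ in Condition~\eqref{cond:pes-two-cell-order} to the pointwise inequalities available from $TIS$, which is automatic because each such set is a singleton.
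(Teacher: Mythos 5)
Your proof is correct and takes essentially the same route as the paper: reuse the total orders verbatim as partial orders, observe that $\Max^+(\alpha)$ and $\Min^-(\alpha)$ collapse to the singletons $\{\max^+(\alpha)\}$ and $\{\min^-(\alpha)\}$, and deduce Conditions~\ref{def:ES}.\eqref{cond:pes-max} and~\ref{def:ES}.\eqref{cond:pes-min} from totality of $\prec$ combined with Condition~\ref{defn:TIS}.\eqref{cond:consistent}. If anything, your contrapositive argument is slightly more careful than the paper's terse version, which asserts the strict comparison $\Max^+(\alpha)\prec^+\Max^+(\beta)$ directly ``by definition,'' whereas maximality alone only gives the non-strict inequality that your contradiction argument then exploits.
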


\begin{proof}
Conditions~\eqref{cond:pes-max}~and~\eqref{cond:pes-min} are implied by an invariant staggering. 
Indeed, suppose $\Max^+(\alpha) \subset \partial \beta$. 
Then $\Max^+(\alpha) \prec^+ \Max^+(\beta)$ by definition, and, as $\prec$ is a total order on 2-cells, we have $\alpha \prec \beta$.
\end{proof}

It is not hard to see that $IS$ is equivalent to $BS$.

\begin{lem}[$BS\Leftrightarrow IS$] 
\label{lem:BS implies IS}
$X$ has an invariant staggering if and only if $X$ is bislim. \end{lem}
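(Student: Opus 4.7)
The plan is to convert between the two structures by setting $\mathcal E^{\pm}$ equal to the sets of distinguished $1$-cells $\{\alpha^{\pm}\}$, and to translate each condition of $BS$ into the corresponding condition of $IS$ (and vice versa) via the induced partial order on $2$-cells. Throughout I will use Remark~\ref{rem:pre/partial order} to replace the bislim preorder by a $\pi_1X$-invariant partial order on $\{r^+\}$, which is closer in spirit to the data of an invariant staggering.

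For $BS \Rightarrow IS$, starting from a bislim structure, I define $\alpha \prec \beta$ on 2-cells of $\widetilde X$ by $\alpha^+ \prec \beta^+$, set $\mathcal E^+ = \{r^+\}$ (ordered by restriction), and set $\mathcal E^- = \{r^-\}$ with $\alpha^-\prec^- \beta^-$ declared iff $\alpha \prec \beta$. The main step of this direction is to verify well-definedness: both $\alpha\mapsto \alpha^+$ and $\alpha \mapsto \alpha^-$ are injective. Injectivity of $\alpha\mapsto \alpha^+$ follows from Lemma~\ref{lem:unique strictly maximal}; injectivity of $\alpha\mapsto \alpha^-$ is not stated elsewhere, so I would check it directly: if $\alpha \ne \beta$ and $\alpha^- = \beta^-$, then $\alpha^- \subset \partial \beta$ and $\beta^- \subset \partial \alpha$, so condition \ref{defn:slim}.\eqref{bislim:bislim} of $BS$ yields both $\beta^+\prec \alpha^+$ and $\alpha^+ \prec \beta^+$, a contradiction. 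Lemma~\ref{lem:unique strictly maximal} then says $\Max^+(\alpha) = \{\alpha^+\}$, and a parallel argument using bislim condition \ref{defn:slim}.\eqref{bislim:bislim} shows $\Min^-(\alpha) = \{\alpha^-\}$. Conditions \eqref{cond:pes-unique-traversal}, \eqref{cond:pes-two-cell-order}, \eqref{cond:pes-max}, and \eqref{cond:pes-min} of $IS$ are then exactly conditions \ref{defn:slim}.\eqref{slim:trav once}, the defining equation of $\prec$, condition \ref{defn:slim}.\eqref{slim:distinct}, and condition \ref{defn:slim}.\eqref{bislim:bislim}, respectively.

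For $IS \Rightarrow BS$, starting from an invariant staggering, I choose a $\pi_1X$-equivariant selection $\alpha^+ \in \Max^+(\alpha)$ and $\alpha^-\in \Min^-(\alpha)$ (pick representatives once per $\pi_1X$-orbit and extend). The map $\alpha \mapsto \alpha^+$ is again injective: if $\alpha \ne \beta$ and $\alpha^+ = \beta^+$ then $\alpha^+ \subset \partial \beta$ gives $\alpha\prec \beta$ by \eqref{cond:pes-max} of $IS$, and symmetrically $\beta \prec \alpha$, contradicting that $\prec$ is a partial order on 2-cells. I then define a $\pi_1X$-invariant preorder on the 1-cells of $\widetilde X$ by declaring $\alpha^+ \prec \beta^+$ iff $\alpha \prec \beta$ on $\{r^+\}$, and leaving all other pairs incomparable (extended by reflexivity). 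Condition \ref{defn:slim}.\eqref{slim:trav once} is condition \eqref{cond:pes-unique-traversal} of $IS$; equivariance is automatic. For \ref{defn:slim}.\eqref{slim:distinct}, if $r_1^+ \subset \partial r_2$ with $r_1\ne r_2$, then since $r_1^+ \in \Max^+(r_1)$, condition \eqref{cond:pes-max} of $IS$ gives $r_1 \prec r_2$, hence $r_1^+ \prec r_2^+$; and condition \ref{defn:slim}.\eqref{bislim:bislim} follows similarly from \eqref{cond:pes-min}.

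The main obstacle, though modest, is keeping track of the fact that $\Max^+(\alpha)$ and $\Min^-(\alpha)$ can contain more than one $1$-cell in the $IS$ framework while $r^+, r^-$ are single $1$-cells in $BS$; the injectivity lemmas above, together with a careful equivariant choice of representative, bridge this gap.
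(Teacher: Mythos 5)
Your proposal is correct and takes essentially the same approach as the paper: identify $\mathcal E^\pm$ with the distinguished 1-cells $\{r^\pm\}$, transfer the orders through the maps $\alpha\mapsto\alpha^\pm$, and match the conditions of Definition~\ref{defn:slim} and Definition~\ref{def:ES} one-by-one, using Remark~\ref{rem:pre/partial order} and Lemma~\ref{lem:unique strictly maximal} in the $BS\Rightarrow IS$ direction and equivariant choices from $\Max^+(\alpha)$, $\Min^-(\alpha)$ in the other. The only notable difference is that you verify explicitly the injectivity of $\alpha\mapsto\alpha^-$, which the paper's declaration ``$a^-\prec^- b^-$ iff $a^+\prec^+ b^+$'' needs for well-definedness but leaves implicit.
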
 

\begin{proof}[Proof of $IS \Rightarrow BS$]
For each 2-cell $r$, we declare $r^+$ to be a 1-cell of $\Max^+(r)$ and $r^-$ to be a 1-cell of $\Min^-(r)$. Moreover, we insist that these choices are made $\pi_1$-equivariantly.

If $r^+\subset \boundary s$ and $r\neq s$
then Condition~\eqref{cond:pes-max} implies $r\prec s$, and hence Condition~\eqref{cond:pes-two-cell-order} implies $r^+\prec^+ s^+$.

If $s^-\subset \boundary r$ and $r\neq s$
then Condition~\eqref{cond:pes-min} implies $s\prec r $, and hence Condition~\eqref{cond:pes-two-cell-order} implies $s^+\prec^+ r^+$.
\end{proof}

\begin{proof}[Proof of $BS \Rightarrow IS$]
By Remark~\ref{rem:pre/partial order}, 
we can assume the preorder of the $BS$ structure is a partial order, which we denote by $\prec^+$ for the purposes of this proof. 

Let $\mathcal E^\pm = \{ r^\pm: r\in \text{2-cells$(\widetilde X)$}\}$.
We restrict $\prec^+$ to $\mathcal E^+$.
By Lemma~\ref{lem:unique strictly maximal}, $\Max^+(r)=\{r^+\}$.

	Declare $a^- \prec^- b^-$ if and only if $a^+ \prec^+ b^+$.
Again, $\Min^-(r)=\{r^-\}$.
	
Declare $a \prec b$ if for $e_a \in \Max^+(a)$, $e_b \in \Max^+(b)$ we have $e_a \prec^+ e_b$.
	
	Conditon~\ref{def:ES}.\eqref{cond:pes-sufficiency} holds since
	 every 2-cell $r$ has distinguished edges $r^-$ and $r^+$.
	
	Conditon~\ref{def:ES}.\eqref{cond:pes-unique-traversal} is assumed in the definition of a bislim structure.
	 
	If the element $a^+$ of $\Max^+(a)$ lies in $\partial b$, then $a^+ \prec^+ b^+$ due to the bislim structure, whence $a \prec b$.
If the element $a^-$ of $\Min^-(a)$ lies in $\partial b$, then $b^+ \prec^+ a^+$ whence $b \prec a$.
	
	Finally, $a \prec b$ implies $a^+ \prec^+ b^+$ and $a^- \prec^- b^-$ by definition.
	\end{proof}

We now complete the proof of the equivalence of $IS$, $GS$, and $BS$.

\begin{lem}[$IS\Rightarrow GS$]
\label{lem:ES to GS}
If $X$ has an invariant staggering then $X$ has a good stacking.
\end{lem}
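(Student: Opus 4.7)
The plan is to construct a $\pi_1X$-invariant assignment of real heights to pairs $(\tilde r,\tilde e)$ with $\tilde e\subset\partial\tilde r$ in $\widetilde X$, which will descend to a good stacking of $X$. Combining $IS\Rightarrow BS$ (Lemma~\ref{lem:BS implies IS}) with Lemma~\ref{lem:bislim embedded 2-cells}, the 2-cells of $\widetilde X$ embed, so each 1-cell of $\partial\tilde r$ is traversed exactly once by $\partial_\p\tilde r$. First, equivariantly choose $\tilde r^+\in\Max^+(\tilde r)$ and $\tilde r^-\in\Min^-(\tilde r)$ for every 2-cell $\tilde r$ of $\widetilde X$, making a single choice per $\pi_1X$-orbit.

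For each 1-cell $\tilde e$ of $\widetilde X$, consider $S(\tilde e):=\{\tilde r : \tilde e\subset\partial\tilde r\}$ equipped with the restriction of $\prec$. The key observation is that Conditions~\eqref{cond:pes-max} and~\eqref{cond:pes-min} force strict extrema: if $\tilde e=\tilde r^+$, then every other $\tilde\beta\in S(\tilde e)$ satisfies $\tilde r\prec\tilde\beta$, so $\tilde r$ is the strict $\prec$-minimum of $S(\tilde e)$; symmetrically, if $\tilde e=\tilde r^-$, then $\tilde r$ is the strict $\prec$-maximum. Next, choose a representative $\tilde e_0$ for each $\pi_1X$-orbit of 1-cells, extend $\prec|_{S(\tilde e_0)}$ to a total order by Szpilrajn's theorem, and propagate by the $\pi_1X$-action (well-defined since $\prec$ is $\pi_1X$-invariant and the action is free by torsion-freeness). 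Each $S(\tilde e_0)$ is countable and therefore embeds into $\reals$ in an order-reversing manner, so that the strict $\prec$-minimum maps to the maximum height and the strict $\prec$-maximum to the minimum. Let $h(\tilde r,\tilde e)\in\reals$ denote the resulting $\pi_1X$-invariant height.

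To define $\Psi:W\to X^1\times\reals$, lift each $w_i$ via a chosen lift $\tilde r_i$ of $r_i$ and assign to each 1-cell $c\subset w_i$ corresponding to $\tilde e\subset\partial\tilde r_i$ the height $h(\tilde r_i,\tilde e)$; invariance of $h$ makes this independent of the choice of lift. Setting $h_i$ and $\ell_i$ to be points in the 1-cells of $w_i$ lifting to $\tilde r_i^+$ and $\tilde r_i^-$ respectively, the strict extremality of $\tilde r_i$ in $S(\tilde r_i^\pm)$ forces $\rho_\reals(\Psi(h_i))$ to be maximal and $\rho_\reals(\Psi(\ell_i))$ minimal in their respective fibers, verifying the goodness condition. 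Injectivity of $\Psi$ on 1-cell interiors follows because distinct traversals of a 1-cell $e$ of $X$ correspond to distinct pairs $(\tilde r,\tilde e)$ over a fixed preimage (using torsion-freeness to ensure $g\tilde r \neq \tilde r$ for $g\neq 1$), which the order-reversing embedding separates in $\reals$.

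The main obstacle is arranging $\Psi$ to be a set-theoretic injection where lifted boundary cycles meet at 0-cells of $X^1$: over each 0-cell $v$ one must draw arcs connecting the heights on adjacent 1-cells of each cycle so that they do not overlap as subsets of $\{v\}\times\reals$. Because only countably many such arcs occur at each $v$, a routine inductive perturbation (ordering the arcs and placing them in disjoint height windows within $\{v\}\times\reals$) resolves this, preserving the extremal heights on adjacent 1-cells. The substance of the proof lies in the order-theoretic construction in $\widetilde X$; the 0-cell handling is a standard combinatorial technicality.
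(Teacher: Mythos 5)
Your opening moves are sound: the strict-extremality observation (that Conditions~\ref{cond:pes-max} and~\ref{cond:pes-min} make $\tilde r$ the strict $\prec$-minimum of $S(\tilde r^+)$ and the strict $\prec$-maximum of $S(\tilde r^-)$) is exactly the mechanism the paper also uses to produce high and low points, and your heights are injective over the interior of each single 1-cell. The fatal problem is that you perform the Szpilrajn extensions \emph{independently} over each $\pi_1X$-orbit of 1-cells, and nothing makes these local total orders mutually consistent. Since $\rho_X\circ\Psi=\Phi$, a stacking is the graph of a continuous height function on $W$, and each passage of a boundary cycle through a 0-cell $v$ is a \emph{single point} of the fiber $\{v\}\times\reals$ (the preimage $\Phi^{-1}(v)$ is discrete, so there are no ``arcs over $v$''). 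Consequently, if two strands both traverse a corner $e_1,v,e_2$, continuity and injectivity force their relative order over $e_1$, at $v$, and over $e_2$ to coincide. But if $\tilde r_A,\tilde r_B$ are $\prec$-incomparable 2-cells whose boundaries both run through such a corner (nothing in Definition~\ref{def:ES} forbids this when the shared 1-cells are extremal for neither cell), your extensions of $\prec|_{S(\tilde e_1)}$ and $\prec|_{S(\tilde e_2)}$ are free to rank them oppositely; then every continuous completion of your height data has a crossing, and $\Psi$ is not an injection. No perturbation in ``disjoint height windows'' can repair this: the obstruction is order-theoretic, not geometric.

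The obvious repair --- one global Szpilrajn extension of $\prec$ on all 2-cells, restricted to each $S(\tilde e)$ --- restores consistency across corners but destroys $\pi_1X$-invariance, since an invariant partial order need not admit an invariant total extension (this is precisely the ``Szpilrajneous'' issue discussed in the paper); your descent to $X$, which relies on invariance of $h$, then collapses. This tension between coherence and invariance is the actual content of the lemma, and it is what the paper's proof is engineered to resolve: there each boundary cycle of $\widetilde X$ is given the \emph{constant} height $\widetilde\sigma(\boundary r)=r$ in the poset $\peals$ of all 2-cells (so consistency across corners is automatic), a single non-equivariant order-embedding $\peals\to\reals$ is chosen, and the passage to $X$ is achieved not by invariance of heights but by observing that $\pi_1X\backslash(\widetilde X^1\times\reals)$ is an orientable, hence trivial, $\reals$-bundle over $X^1$ --- orientability needing only that $\pi_1X$ preserve the partial order. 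Goodness then follows from Conditions~\ref{cond:pes-max} and~\ref{cond:pes-min} exactly as in your extremality argument. Without this (or some equivalent) device for reconciling fiberwise total orders with the group action, your construction does not produce a stacking.
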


\begin{proof}
We first show that there is a good stacking of $\widetilde X$ that is $\pi_1X$-equivariant.
This descends to a stacking of $X$.
(We use that $X$ is locally countable. Otherwise, a larger totally ordered set must be substituted for $\reals$ in the definition of good stacking.)

Let $\widetilde W$ be the disjoint union of the boundary cycles of the 2-cells of $\widetilde X$.
Let $\widetilde \sigma : \widetilde W\rightarrow \peals$, where $\peals $ is the partially ordered set of 2-cells. This is the quotient map sending each component to a point.

Let $\widetilde \Phi:\widetilde W \rightarrow \widetilde X^1$ be the disjoint union of attaching maps.
This determines a $\pi_1X$-equivariant map $\widetilde \Psi: \widetilde W \rightarrow 
\widetilde X^1 \times \peals$, such that $\rho_{\widetilde X^1} 
\circ \widetilde \Psi = \widetilde \Phi$
and $\rho_\peals \circ \widetilde \Psi = \widetilde \sigma$.
For each 2-cell $r$, observe that each 1-cell $c$ of $\Max^+(r)$ provides `low points' for $\boundary r$, in the sense that, among all 2-cells $s\neq r$ whose boundary paths traverse $c$, we have $\widetilde \sigma(\partial r) \prec \widetilde \sigma(\partial s)$: indeed, if $c\subset \partial s$ then $c \prec^+ \Max^+(s)$, 
by Condition~\ref{def:ES}.\eqref{cond:pes-max}.
Similarly, $\Min^-(r)$ provides `high points' for $\boundary r$ by Condition~\ref{def:ES}.\eqref{cond:pes-min}.

As $\peals$ is countable, Szpilrajn's Theorem \cite{Szpilrajn1930} provides an order-preserving inclusion $\peals \to \reals$.
Endow $\peals$ with the subspace topology.

By Lemma~\ref{lem:BS implies IS}, $X$ is bislim, and therefore by Lemma~\ref{lem:bislim embedded 2-cells} its 2-cells embed in $\widetilde X$. 
Therefore $\widetilde \sigma : \widetilde W \rightarrow \widetilde X^1\times \peals$ is an embedding, which yields an embedding $\sigma:W \rightarrow 
\pi_1X\backslash (\widetilde{X}^1\times \peals)$.
The embedding $\peals\to \reals$ yields the commutative diagram:
\[\xymatrixcolsep{.3pc}
\xymatrix{
& \pi_1X\backslash (\widetilde{X}^1\times \peals)\ar[rr] \ar[rd]
& & \pi_1X\backslash (\widetilde X^1 \times \reals) \ar[ld] \\
&& X^1 
}
\]
The right side is an orientable $\reals$-bundle over $X^1$ because $\pi_1X \curvearrowright\peals$ is $\prec$-preserving. Therefore 
$\pi_1X\backslash (\widetilde{X}^1\times \reals)\approx X^1\times \reals$, and $\sigma$ yields a stacking
which is good by Condition~\ref{def:ES}.\eqref{cond:pes-sufficiency}: for each 2-cell $r$, there are high and low points for $\sigma(\partial r)$.
\end{proof}

\begin{rem}\label{rem:torsion}
A more powersful version of Theorem~\ref{main thm} holds, in which relators of $X$ are allowed to be proper powers.
Good stackings should be replaced by pretty good stackings, that allow relators to cover their images.
The definition of bislim should be altered as follows: replace `distinct 2-cells' with `$\partial r_1 \neq \partial r_2$' in Conditions~\ref{defn:slim}.\eqref{slim:distinct}~and~\ref{defn:slim}.\eqref{bislim:bislim}.
The definition of invariant staggering should be altered as follows: replace `$\alpha\neq \beta$' with `$\partial \alpha \neq \partial \beta$' in Conditions~\ref{def:ES}.\eqref{cond:pes-max}~and~\ref{def:ES}.\eqref{cond:pes-min}. 
\end{rem}
	
\vspace{1cm}

\bibliographystyle{alpha}

\begin{thebibliography}{DDH{\etalchar{+}}07}

\bibitem[BH72]{BurnsHale72}
R.~G. Burns and V.~W.~D. Hale.
\newblock A note on group rings of certain torsion-free groups.
\newblock {\em Canad. Math. Bull.}, 15:441--445, 1972.

\bibitem[Bro84]{Brodskii84}
S.~D. Brodski{\u\i}.
\newblock Equations over groups, and groups with one defining relation.
\newblock {\em Sibirsk. Mat. Zh.}, 25(2):84--103, 1984.

\bibitem[DDH{\etalchar{+}}07]{DDHPV2007}
M.~A. Dabkowska, M.~K. Dabkowski, V.~S. Harizanov, J.~H. Przytycki, and M.~A.
  Veve.
\newblock Compactness of the space of left orders.
\newblock {\em J. Knot Theory Ramifications}, 16(3):257--266, 2007.

\bibitem[GW19]{GasterWise2019}
Jonah Gaster and Daniel~T. Wise.
\newblock Bicollapsibility and groups with torsion.
\newblock pages 1--21, 2019.

\bibitem[How87]{Howie87}
James Howie.
\newblock How to generalize one-relator group theory.
\newblock In S.~M. Gersten and John~R. Stallings, editors, {\em Combinatorial
  group theory and topology}, pages 53--78, Princeton, N.J., 1987. Princeton
  Univ. Press.

\bibitem[How00]{Howie00}
James Howie.
\newblock A short proof of a theorem of {B}rodski\u\i.
\newblock {\em Publ. Mat.}, 44(2):641--647, 2000.

\bibitem[HW16]{HelferWise2015}
Joseph Helfer and Daniel~T. Wise.
\newblock Counting cycles in labeled graphs: the nonpositive immersion property
  for one-relator groups.
\newblock {\em Int. Math. Res. Not. IMRN}, (9):2813--2827, 2016.

\bibitem[LS77]{LS77}
Roger~C. Lyndon and Paul~E. Schupp.
\newblock {\em Combinatorial group theory}.
\newblock Springer-Verlag, Berlin, 1977.
\newblock Ergebnisse der Mathematik und ihrer Grenz\-gebiete, Band 89.

\bibitem[LW17]{LouderWilton2014}
Larsen Louder and Henry Wilton.
\newblock Stackings and the {$W$}-cycles conjecture.
\newblock {\em Canad. Math. Bull.}, 60(3):604--612, 2017.

\bibitem[Pru14]{Pruss2014}
Alexander~R. Pruss.
\newblock Linear extensions of orders invariant under abelian group actions.
\newblock {\em Colloquium Mathematicum}, 137(1):117--125, 2014.

\bibitem[Szp30]{Szpilrajn1930}
Edward Szpilrajn.
\newblock Sur l'extension de l'ordre partiel.
\newblock {\em Fundamenta Mathematicae}, 16(1):386--389, 1930.

\bibitem[Wei71]{Weinbaum71}
C.~M. Weinbaum.
\newblock The word and conjugacy problems for the knot group of any tame,
  prime, alternating knot.
\newblock {\em Proc. Amer. Math. Soc.}, 30:22--26, 1971.

\end{thebibliography}
\newcommand{\etalchar}[1]{$^{#1}$}
\def\cprime{$'$} \def\polhk#1{\setbox0=\hbox{#1}{\ooalign{\hidewidth
  \lower1.5ex\hbox{`}\hidewidth\crcr\unhbox0}}} \def\cprime{$'$}
  \def\cprime{$'$} \def\polhk#1{\setbox0=\hbox{#1}{\ooalign{\hidewidth
  \lower1.5ex\hbox{`}\hidewidth\crcr\unhbox0}}}

\newpage

\end{document}